\theoremstyle{plain}
\newtheorem{theorem}{Theorem}[section]
\newtheorem{lemma}[theorem]{Lemma}
\newtheorem*{de-lemma}{Lemma}
\newtheorem{proposition}[theorem]{Proposition}
\theoremstyle{remark}
\theoremstyle{definition}
\newtheorem{remark}{Remark}
\newtheorem{step}{Step}
\DeclareMathOperator{\dv}{div}
\newcommand{\dd}{\mathrm{d}}
\newcommand{\R}{\mathbb{R}}
\newcommand{\C}{\mathbb{C}}
\begin{document}

\title[Gradient estimates and other related results]{Gradient estimates for semilinear elliptic systems and other related results}

\author{Panayotis Smyrnelis} \address[P.~ Smyrnelis]{Department of Mathematics\\ University of Athens\\ Panepistemiopolis\\ 15784 Athens\\ Greece}
\email[P. ~Smyrnelis]{smpanos@math.uoa.gr}
\thanks{The author was partially supported through the project PDEGE – Partial Differential Equations
Motivated by Geometric Evolution, co-financed by the European Union – European Social
Fund (ESF) and national resources, in the framework of the program Aristeia of the ‘Operational
Program Education and Lifelong Learning’ of the National Strategic Reference Framework
(NSRF)}

\subjclass[2000]{35J47}
\keywords{Modica's estimate; Liouville theorem; gradient estimates; entire solutions to elliptic systems; stress-energy tensor; monotonicity formula}

\begin{abstract}
A periodic connection is constructed for a double well potential defined in the plane. This solution violates Modica's estimate as well as the corresponding Liouville Theorem for general phase transition potentials. Gradient estimates are also established for several kinds of elliptic systems. They allow us to prove in some particular cases the Liouville Theorem. Finally, we give an alternative form of the stress-energy tensor for solutions defined in planar domains. As an application, we deduce a (strong) monotonicity formula.
\end{abstract}

\maketitle
\section{Introduction}
In this paper we study the possibility of extending the Modica estimate (cf. \cite{modica}) to the vector case. The Modica estimate states that for a non-negative potential $W \in C^2(\R,\R)$, and for every bounded entire solution $u \in C^3(\R^n,\R)$ of the equation
\begin{equation}\label{scalar equation}
  \Delta u = W'(u),
\end{equation}
then
\begin{equation}\label{Modica estimate}
  \frac{1}{2}|\nabla u(x)|^2 \leq W(u(x)), \ \forall x \in \R^n.
\end{equation}
A particular case occurs when $n=1$. Then, for the bounded solutions $u:\R \to \R$ of the O.D.E.
\begin{equation}\label{ODE}
  \frac{\dd^2 u}{\dd x^2}=W'(u),
\end{equation}
the \textit{Hamiltonian} $H=\frac{1}{2}|u_x|^2-W(u)$ is a non-positive constant. This law expressing the conservation of the mechanical energy follows by an integration of \eqref{ODE}.

The Modica estimate has many applications (cf. \cite{modica} and \cite{caffarelli-garofalo}). Let us mention:
\begin{itemize}
\item[1)] A Liouville type theorem: if $u:\R^n \to \R$ is a bounded solution of \eqref{scalar equation} such that $W(u(x_0))=0$ for some $x_0 \in \R^n$, then $u$ is a constant.
\item[2)] The strong monotonicity formula according to which for every bounded solution $u:\R^n \to \R$ of \eqref{scalar equation} and every $x \in \R^n$, the quotient $$\frac{1}{r^{n-1}}\int_{B(x,r)} \Big[ \frac{1}{2}|\nabla u|^2+W(u) \Big] \dd x $$ is an increasing function of $r>0$ ($B(x,r) \subset \R^n$ denotes the ball centered at $x$ of radius $r$).
\end{itemize}
Assuming that the solutions are entire is an essential hypothesis to prove the Modica estimate. We mention that other gradient bounds can be obtained for solutions of \eqref{scalar equation} defined in proper domains of $\R^n$ (cf. \cite{farina3}).  

In the vector case, for non-negative potentials $W \in C^2(\R^m,\R)$, and for bounded entire solutions $u \in C^3(\R^n;\R^m)$ of the system
\begin{equation}\label{elliptic system}
\Delta u= \nabla W(u),
\end{equation}
the Modica estimate does no longer hold. This is a well-known fact for the Ginzburg-Landau potential $W: \R^m \to \R$, $W(u)=\frac{1}{4}(|u|^2-1)^2$ (cf. \cite{farina2}, or \cite{gui}). In the present paper, we also give a counterexample which violates the Modica estimate as well as the Liouville type theorem for a double well potential defined in the plane (cf. section 2). Next, in section 3, we establish gradient estimates for several kinds of elliptic systems following the method of Caffarelli et al. (cf. \cite{caffarelli-garofalo}). Since there are no general estimates in the vector case, we show how to obtain gradient bounds in various situations. Our aim is to present a flexible technique which can easily be adapted to a more general context, or to study more specific problems. That is why, after stating several abstract theorems, we focus on the Ginzburg-Landau system \eqref{GL system}, and give in this particular case, a gradient bound which is sharp asymptotically. From these estimates, we can deduce under certain assumptions, the Liouville type theorem, and the confinement of all bounded solutions in a determined region.

In section 4, we introduce for solutions to \eqref{elliptic system} defined in planar domains, a new tool which is equivalent to the stress-energy tensor (cf. \cite{alikakosbasic} and \cite{alikakos-fal}). More precisely, we associate to every solution $u: \R^2 \supset \Omega \to \R^m$ of \eqref{elliptic system}, a function $U:\R^2 \supset \Omega  \to \R$ which solves the equation $\Delta U = 4 W(u)$. We show that the Modica estimate implies the convexity of $U$, and give as an application, a (strong) monotonicity formula for all bounded solutions $u: \R^2 \to \R$ of \eqref{scalar equation}.

\section{Construction of a periodic connection for a double well potential in the plane}
We are going to construct a double well potential: $W:\R^2 \to \R$, such that
\begin{itemize}
\item[(i)] $W(a^{\pm})=0$ with $a^{\pm}=(\pm 2,0)$, $W(u) > 0$ for $u \neq a^{\pm}$,
\item[(ii)] $D^2 W(a^{\pm})$ is a positive definite matrix,
\item[(iii)] $W$ is symmetric with respect to the coordinate axes,
\end{itemize}
and a solution $u:\R \to \R^2$ of the O.D.E. $\frac{\dd^2 u}{\dd x^2}=\nabla W(u)$ such that
\begin{itemize}
\item[(i)] $\forall x \in \R, \ u(x+T)=u(x)$ for some $T>0$ (that is, $u$ is periodic),
\item[(ii)] $u(0)=a^+$ and $u(T/2)=a^-$ ($u$ connects the minima of $W$),
\item[(iii)] the derivative of $u$ at $x=0$ or $x=T/2$ does not vanish.
\end{itemize}
Clearly, this solution violates the Modica estimate (since $\frac{1}{2} \big | \frac{\dd u}{\dd x}(0) \big |^2 > W(u(0))=0$), as well as the Liouville type theorem (since $W(u(0))=0$, $u$ is bounded and not constant). We point out that $\frac{\dd u}{\dd x}(0)$ cannot vanish, since otherwise $u$ would be constant in view of the uniqueness result for O.D.Es. To construct the solution and the potential, we proceed step by step.
\begin{step}
We consider first, a $C^{\infty}$ closed curve $\Gamma$ in the plane which is symmetric with respect to the coordinate axes, and such that
$\{(\pm 2, u_2) : u_2 \in [-1,1] \} \subset \Gamma$. $\Gamma$ will be the trajectory of our solution. We denote by $n$ the inward normal to $\Gamma$, and by $(e_1,e_2)$ the canonical basis of $\R^2$.
\end{step}
\begin{step}
In a neighborhood of $a^{\pm}$, we define $W$ as follows:
$$W(u)=2 \lambda \rho(|u-a^{\pm}|^2), \text{ for $|u_1 \mp 2| \leq 1$ and $|u_2| \leq 1$},$$
where $\lambda>0$ is a constant to be chosen, and $\rho: [0,\infty) \to [0,1/2]$ is a smooth increasing function such that
\begin{equation}
\rho(\alpha)=
\begin{cases}
\alpha, &\text{ for } 0 \leq \alpha \leq 1/4 \\
1/2, &\text{ for } \alpha \geq 3/4. \nonumber
\end{cases}
\end{equation}
\end{step}
\begin{step}
Next, we define $u$ to be the solution of $\frac{\dd^2 u}{\dd x^2}=\nabla W(u)$ with initial data $u(0)=a^+$, and $\frac{\dd u}{\dd x}(0)=e_2 /2$. Since the potential is radial, we easily see that $\nabla W(2,u_2)=(0,4\lambda\rho'(u_2^2)u_2)$, and that $u=(2,u_2)$ with
$$\frac{\dd^2 u_2}{\dd x^2}(x)=4\lambda\rho'(u_2^2)u_2, \ \frac{\dd u_2}{\dd x}(0)=\frac{1}{2}, \ u_2(0)=0.$$
In addition, we note that $u_2(x)>0$ for $x>0$. Indeed, if $u_2(t_0)=0$ for some $t_0>0$ such that $u(x)>0$ in the interval $(0,t_0)$, then we would have $u_2(0)=u_2(t_0)=0$, with $u_2$ convex and positive in $(0,t_0)$, which is a contradiction. As a consequence, $u_2$ and $\frac{\dd u_2}{\dd x}$ are increasing for $x>0$. Let $0<t_1<t_2$ be the times when $u_2(t_1)=\sqrt{3/4}$ and $u_2(t_2)=1$. Now, we choose the constant $\lambda$ such that $\frac{\dd u}{\dd x}(t_1)=e_2$. Since the Hamiltonian $H=\frac{1}{2}|u_x|^2-W(u)$ is constant along solutions, we take $\lambda$ such that $H=\frac{1}{2}\big(  \frac{1}{2} \big)^2=\frac{1}{2}-\lambda$. With this choice of $\lambda$, we still have $\frac{\dd u}{\dd x}(x)=e_2$ for $x \in [t_1,t_2]$, since by assumption, $W$ is constant on this portion of the curve.

\end{step}
\begin{step}
To extend $u$ for $x \geq t_2$, we parametrize by arc length the part of $\Gamma$ starting at the point $a^++e_2$ and ending at the point $a^-+e_2$. Let $\gamma :[t_2,t_3] \to \R^2$ be such a parametrization. Then, we set $u(x):=\gamma(x)$ for $x \in [t_2,t_3]$. Clearly, $u:[0,t_3] \to \R^2$ is smooth, since in the interval $[t_1,t_2]$, $u$ also parametrizes $\Gamma$ by arc length. In addition, we have $\frac{\dd^2 u}{\dd x^2}(x) \bot \Gamma$, for $x \in [t_1,t_3]$. To see this, just differentiate the equation $\big| \frac{\dd u}{\dd x}(x)\big|^2=1$ and note that $\frac{\dd u}{\dd x}$ is the tangent unit vector of $\Gamma$.
\end{step}
\begin{step}
Now, we define $W$ in a tubular neighborhood of the part of $\Gamma$ starting at the point $a^++e_2$ and ending at the point $a^-+e_2$ (cf. \cite{docarmo}). We set for $x \in [t_2,t_3]$ and $|\mu|\leq \epsilon \ll 1$: $W(u(x)+\mu n_{u(x)}):=\lambda +\mu \langle \frac{\dd^2 u}{\dd x^2}(x),n_{u(x)} \rangle$, where we have denoted by $\langle \cdot,\cdot \rangle$ the Euclidean inner product. By construction, for $x \in [t_2,t_3]$: 
\begin{equation}\label{nabla W sec2}
\nabla W(u(x))=\left\langle \frac{\dd^2 u}{\dd x^2}(x),n_{u(x)} \right \rangle n_{u(x)},
\end{equation} 
and $W$ is smooth in a neighborhood of the part of $\Gamma$ between the points $a^+$ and $a^-+e_2$. Indeed, at the junction of the square $\{ (u_1,u_2): \ |u_1 - 2| \leq 1, \ |u_2| \leq 1\}$ and of the tubular neighborhood, $W(u) \equiv \lambda$. Thanks to \eqref{nabla W sec2}, we also see that $u$ satisfies the equation $\frac{\dd^2 u}{\dd x^2}(x)=\nabla W(u(x))$ for $x \in [0,t_3]$. Clearly, if $\epsilon$ is small enough, we can ensure that for $x \in [t_2,t_3]$ and $|\mu|\leq \epsilon \ll 1$: $W(u(x)+\mu n)\geq \lambda/2$.
\end{step}
\begin{step}
To extend $u$ for $x \geq t_3$, we set $$u(x):=(-2,u_2(t_2+t_3-x)) \text{ for } x \in [t_3,t_2+t_3],$$ and check as in Step 3, that it is a solution of $\frac{\dd^2 u}{\dd x^2}=\nabla W(u)$. Since in the interval $[t_3,t_2+t_3-t_1]$, $u$ parametrizes $\Gamma$ by arc length, this extension is smooth at $x=t_3$. Furthermore, at time $T/2:=t_2+t_3$, we have $u(T/2)=a^-$. Next, we extend $W$ by symmetry for $u_2<0$, in a neighborhood of the remaining portion of $\Gamma$, setting $W(u_1,u_2)=W(u_1,-u_2)$. Since $W$ is also by construction symmetric with respect to the $u_2$ coordinate axis, that is, $W(u_1,u_2)=W(-u_1,u_2)$, we have $\nabla W(u)=-\nabla W(-u)$. Thus, setting $u(x):=-u(x-T/2)$ for $x \in [T/2,T]$, we define a solution of $\frac{\dd^2 u}{\dd x^2}=\nabla W(u)$ on the whole period $[0,T]$. To complete the construction, we extend $u$ periodically for all $x \in \R$, and $W$ on the whole plane in such a way that $W(u) > 0$ if $u \neq a^{\pm}$.
\end{step}

\begin{remark}\label{remark1}
Let $W:\R^2 \to \R$ be a non-negative potential satisfying for every $u \in \R^2$ such that $|u|=R>0$:
\begin{equation}\label{condition W}
W(u)=\lambda \text{ and } \nabla W(u)=-\mu u, \text{ with $\lambda, \mu>0$, two constants.}
\end{equation}
Then, we check that $u:\R \to \R^2$, $u(x)=R e^{i \sqrt{\mu} x}$ is a solution of the O.D.E. $\frac{\dd^2 u}{\dd x^2}=\nabla W(u)$, and that $H=\frac{1}{2}|u_x|^2-W(u)=\frac{R^2 \mu}{2}-\lambda$ may become positive and arbitrarily big. This situation occurs in the case of the Ginzburg-Landau potential $W(u)=\frac{1}{4}(|u|^2-1)^2$: for every $R$, $0<R<1$, we have a periodic solution of the O.D.E. called $u_R$ for which the corresponding parameters are $\lambda_R=\frac{1}{4}(R^2 -1)^2$ and $\mu_R=1-R^2$. The constant $H_R=\frac{-3R^4+4R^2-1}{4}$ is positive if and only if $\sqrt{1/3}<R<1$. Note that condition \eqref{condition W} may also be satisfied by multiple well potentials.
\end{remark}

\begin{remark}
The Modica estimate does not allow the existence of a periodic connection $u:\R \to \R$ for the scalar problem \eqref{ODE}. Indeed, if $W(u(x_0))=0$ for some $x_0 \in \R$, and $u$ is bounded, then $u_x(x_0)=0$, and by the uniqueness result for O.D.Es $u$ coincides with the constant solution $v \equiv u(x_0)$. However, for a double well potential with non-degenerate zeros $a^-$ and $a^+$, there exists a solution $u:\R \to \R$ of \eqref{ODE} (the heteroclinic connection) such that $\lim_{x \to \pm \infty}u(x)=a^{\pm}$ (cf. \cite{alikakos-fusco1} for the extension of this result to the vector case). In addition, this solution satisfies the equipartition relation $\frac{1}{2}|u_x|^2=W(u)$, that is $H=0$. Note that in the case of our counterexample, there also exists an heteroclinic connection which takes its values (by symmetry) onto the line segment $(-2,2)$ of the $u_1$ coordinate axis.
\end{remark}

\section{Gradient estimates and applications}
The proof of the Modica estimate (cf. \cite{modica}) is based on the use of the so-called $P$-functions (cf. \cite{sperb}). Let us explain in two words how they are chosen an utilized. To every solution $u:\R^n \to \R$ of the scalar equation \eqref{scalar equation}, is associated the $P$-function $P(u;x):= \frac{1}{2} |\nabla u(x) |^2 -W(u(x))$. This choice is relevant, since the function $P$ satisfies the inequality:
\begin{equation}\label{Pfunction}
  |\nabla u |^2 \Delta P \geq \frac{1}{2}|\nabla P|^2+2W'(u) \nabla u \cdot \nabla P
\end{equation}
(without any additional assumptions on $W$ or $u$).
Then, the maximum principle is applied to show that $P(u;x)\leq 0$, for every bounded solution $u$ and every $x \in \R^n$.

For system \eqref{elliptic system},  inequality \eqref{Pfunction} does no longer hold. However, it is possible under appropriate assumptions to construct other $P$-functions to which the maximum principle can be applied. More precisely, we obtain inequalities of the form $\Delta P \geq hP$, and utilize the properties satisfied by the system and the solutions to ensure that $h \geq 0$.

In this section, we establish gradient estimates for several kinds of elliptic systems following the method of Caffarelli et al. (cf. \cite{caffarelli-garofalo}). We present this technique in various situations, and point out that this approach is quite flexible and can easily be adjusted to another context. We begin with a system involving a diagonal matrix $D=\mathrm{diag}(\nu_1,\ldots,\nu_m)$ (cf. \eqref{system0} in Theorem \ref{theorem0}). The expression of the $P$-function (cf. \eqref{Ptheorem0}) is interesting in this case, since it contains the coefficients of $D$. We obtain a rough estimate (cf. \eqref{gradient estimate theorem0-statement}), which is nevertheless sufficient to prove that
\begin{itemize}
\item all bounded solutions $u:\R^n \to \R^m$ of \eqref{system0} have their images in a determined region $\omega \subset \R^m$,
\item if $u(x_0) \in \partial \omega$, for some $x_0 \in \R^n$, then the solution $u$ is constant (Liouville type theorem).
\end{itemize}
Next, in Theorem \ref{theorem1}, we consider the standard system \eqref{elliptic system} and establish a similar result, under an appropriate monotonicity assumption on the potential. Since the estimates given by the two previous Theorems are general and rough, we found it necessary to improve them by studying an important particular case. In Theorem \ref{theorem2}, we focus on the Ginzburg-Landau system \eqref{GL system}, and obtain an estimate which is sharp asymptotically (cf. \eqref{estimate3}). Finally, we consider phase transition potentials $W$, taking advantage of their convexity near the wells. Assuming that $| \nabla u(x)|$ is small enough when $u(x)$ lies outside the convexity region of $W$, we show that the solution $u$ satisfies a stronger estimate than Modica's one (cf. Theorem \ref{theorem3}). We mention that for a double well potential $W:\R \to \R$, the periodic solutions of the O.D.E. \eqref{ODE} which are near the equilibrium in the phase plane satisfy this assumption.

\begin{theorem}\label{theorem0}
Let $D=\mathrm{diag}(\nu_1,\ldots,\nu_m)$ be a $m \times m$ diagonal matrix with $\nu_i>0$, $\forall i=1,\ldots,m$. Let $A$ be a $m \times m$ matrix such that
\begin{itemize}
\item[(i)] $\langle (D^{-1}A+AD^ {-1})u,u \rangle \geq 0$, $\forall u \in \R^m$,
\item[(ii)] $\langle Au,u \rangle \geq c|u|^2$, $\forall u \in \R^m$ and for a constant $c>0$, where $|\cdot|$ and $\langle \cdot,\cdot \rangle$ denote the Euclidean norm and inner product.
\end{itemize}
Assume that $u=(u^1,\ldots,u^m)\in C^2(\R^n ; \R^m) \cap L^{\infty}(\R^n ; \R^m)$ is an entire solution of the system
\begin{equation}\label{system0}
D \Delta u+[1-\langle Au,u\rangle]u=0, \footnote{ This system reduces to system \eqref{elliptic system} only when $(A+A^{\top})=\mu D^{-1}$, for some $\mu \in \R$.}
\end{equation}
Then,
\begin{equation}\label{gradient estimate theorem0-statement}
 \sum_{j=1}^m \frac{\nu_j}{2} |\nabla u^j(x) |^2 \leq C [1-\langle Au(x),u(x) \rangle],
\end{equation}
for a constant $C(A,D, \left\| u \right\|_{L^{\infty}(\R^n;\R^m)})>0$. In particular, $\langle Au(x),u(x) \rangle \leq 1$ for every $x \in \R^n$, and if $u$ is not constant, then  $\langle Au(x),u(x) \rangle < 1$ for every $x \in \R^n$.
\end{theorem}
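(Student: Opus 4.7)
The plan is to adapt the Caffarelli--Garofalo $P$-function technique (as in \cite{caffarelli-garofalo}) to this non-variational setting with the weighted ansatz
\[
P(x):=\sum_{j=1}^m\frac{\nu_j}{2}|\nabla u^j(x)|^2-C\bigl[1-\langle Au(x),u(x)\rangle\bigr],
\]
where $C>0$ is a constant depending on $A$, $D$ and $\|u\|_{L^\infty}$ to be chosen large. Both assertions of the theorem reduce to the single pointwise bound $P\le 0$ on $\mathbb{R}^n$.

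First I would compute $\Delta P$ directly from \eqref{system0}. Setting $\phi:=1-\langle Au,u\rangle$ so that $\nu_j\Delta u^j=-\phi\,u^j$, the Bochner-type identity $\Delta(\tfrac12|\nabla u^j|^2)=|\nabla^2u^j|^2+\nabla u^j\cdot\nabla\Delta u^j$, combined with $\nabla\phi=-(A+A^{\top})u\cdot\nabla u$ and $\Delta\langle Au,u\rangle=2\langle A\nabla u,\nabla u\rangle+\sum_j[(A+A^{\top})u]_j\Delta u^j$, yields an identity of the form
\[
\Delta P=\sum_j\nu_j|\nabla^2u^j|^2+2C\langle A\nabla u,\nabla u\rangle-\phi\,|\nabla u|^2+R(u,\nabla u)-C\phi\,g,
\]
where $g:=\langle D^{-1}(A+A^{\top})u,u\rangle$ is nonnegative by (i) and $R(u,\nabla u)$ is a quadratic form in $\nabla u$ whose coefficients are bounded by $\|u\|_{L^\infty}^2\|A+A^{\top}\|_{\mathrm{op}}$. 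Hypothesis (ii) gives $2C\langle A\nabla u,\nabla u\rangle\ge 2Cc\,|\nabla u|^2$, and the algebraic identity $-C\phi=P-E$ (with $E:=\sum_j\tfrac{\nu_j}{2}|\nabla u^j|^2$ the kinetic part of $P$) lets me rewrite $-C\phi\,g=gP-gE$. Every remaining term in $\Delta P-gP$ is then either nonnegative or controlled by a fixed multiple of $|\nabla u|^2$, so choosing $C$ large enough that $2Cc$ dominates all these coefficients one concludes
\[
\Delta P\ \ge\ g(x)\,P\quad\text{on }\mathbb{R}^n,\qquad g\ge 0,
\]
with the discarded slack being nonnegative and vanishing only if $\nabla u\equiv 0$ and $\nabla^2 u\equiv 0$.

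To conclude $P\le 0$ from this differential inequality, I use boundedness of $P$ via a concentration/limit argument. If $M:=\sup P>0$, pick $x_k$ with $P(x_k)\to M$; Schauder estimates and boundedness of $u$ let me extract a $C^2_{\mathrm{loc}}$-limit $u_\infty$ of the translates $u(\cdot+x_k)$, still a bounded entire solution of \eqref{system0}, whose $P$-function $P_\infty$ attains $M>0$ at the origin. The strong maximum principle applied to $\Delta P_\infty-g_\infty P_\infty\ge 0$ (with $g_\infty\ge 0$) forces $P_\infty\equiv M$, which fed back into the inequality kills the slack, hence $\nabla u_\infty\equiv 0$; so $u_\infty$ is a constant vector $c$, and \eqref{system0} gives $\phi(c)\,c=0$, whence either $c=0$ (so $P_\infty\equiv -C$) or $\langle Ac,c\rangle=1$ (so $P_\infty\equiv 0$), both contradicting $M>0$. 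Hence $P\le 0$ everywhere, giving \eqref{gradient estimate theorem0-statement} and $\langle Au,u\rangle\le 1$; the strict inequality when $u$ is non-constant follows similarly, since equality at some $x_0$ would force $P(x_0)=0=\sup P$ and the same slack-term analysis would yield $\nabla u\equiv 0$. The main obstacle is the absorption step: hypotheses (i), (ii), and the $\nu_j$-weighting of $P$ have to combine just right so that all sign-indefinite contributions to $\Delta P$ fit inside $2Cc|\nabla u|^2$.
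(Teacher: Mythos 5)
Your proposal is correct and follows essentially the same route as the paper: the same $\nu_j$-weighted $P$-function (with $C=\lambda/2$ in the paper's notation), the same absorption of the sign-indefinite gradient terms into $2Cc|\nabla u|^2$ via hypothesis (ii) and the rewriting $-C\phi\,g=gP-gE$ with $g=\langle(AD^{-1}+D^{-1}A)u,u\rangle\ge 0$ from hypothesis (i), and the same translation/compactness argument combined with the maximum principle to force the extremal limit to be a constant solution, hence $c=0$ or $\langle Ac,c\rangle=1$. No gaps.
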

\begin{proof}
Fix $M>0$ and define $$\mathcal{F}_{M}=\{ u \text{ is an entire solution of \eqref{system0}} \mid \left\| u \right\|^2_{L^{\infty}(\R^n;\R^m)}\leq M \}.$$
Let $u \in \mathcal{F}_{M}$. For $j=1,\ldots,m$ and $i=1,\ldots,n$, we have
\begin{equation}
\nu_j \Delta u^j=[\langle Au,u \rangle -1]u^j, \nonumber
\end{equation}
\begin{equation}\label{laplacian u_x_i}
\nu_j \Delta u^j_{x_i}=(\langle Au_{x_i},u \rangle+\langle Au, u_{x_i} \rangle)u^j+[\langle Au,u \rangle -1]u^j_{x_i},
\end{equation}
\begin{equation}
\text{and } \Delta \Big( \frac{\nu_j}{2} |\nabla u^j |^2 \Big)=\nu_j B_j+ \nu_j \sum_{i=1}^n  \Delta u^j_{x_i} \: u^j_{x_i} , \text{ where } B_j:=\sum_{i,k=1}^n|u^j_{x_i x_k}|^2. \nonumber
\end{equation}
Therefore, utilizing \eqref{laplacian u_x_i} we obtain
\begin{equation}
\Delta \Big( \frac{\nu_j}{2} |\nabla u^j |^2 \Big)=\nu_j B_j+ \sum_{i=1}^n (\langle A u_{x_i},u \rangle +\langle Au,u_{x_i} \rangle )  u^j \: u^j_{x_i} +[\langle Au,u \rangle -1]|\nabla u^j |^2, \nonumber
\end{equation}
and
\begin{equation}\label{inequality sum}
\Delta \Big( \sum_{j=1}^m \frac{\nu_j}{2} |\nabla u^j |^2 \Big) \geq B +[\langle Au,u \rangle -1-2amM]|\nabla u |^2,
\end{equation}
where $a:=\|A\|_{L(\R^n;\R^m)}$ and $B:=\sum_{j=1}^m \nu_j B_j$.

On the other hand, we also compute
\begin{align}\label{inequality A}
\Delta \Big(\frac{1}{2} [\langle Au,u \rangle -1] \Big)&=\frac{1}{2}(\langle A \Delta u,u\rangle +\langle Au, \Delta u\rangle)+\sum_{i=1}^n \langle Au_{x_i},u_{x_i}\rangle \nonumber \\
&\geq \frac{1}{2}[\langle Au,u \rangle -1]\langle (AD^{-1}+D^{-1}A)u,u \rangle +c |\nabla u|^2,
\end{align}
since $\Delta u =[\langle Au,u \rangle -1]D^{-1}u$ and $D$ is symmetric.

Now, let $\nu:=\max_j \{ \nu_j \}$,
and let $\lambda>0$ be such that for every $v \in \R^m$, with $|v|^2\leq M$:
\begin{equation}\label{condition lambda}
(\langle Av,v \rangle -1-2am M+\lambda c ) \geq \frac{\nu}{2} \langle (AD^{-1}+D^{-1}A)v,v \rangle.
\end{equation}
Then, for every $u \in \mathcal{F}_M$ we define
\begin{equation}\label{Ptheorem0}
  P(u;x):= \Big( \sum_{j=1}^m \frac{\nu_j}{2} |\nabla u^j(x) |^2 +\frac{\lambda}{2} [\langle Au(x),u(x) \rangle -1] \Big),
\end{equation}
and thanks to \eqref{inequality sum}, \eqref{inequality A} and \eqref{condition lambda}
the inequality
\begin{equation}\label{inequality P(u)}
\Delta P(u;x) \geq  B+\langle (AD^{-1}+D^{-1}A)u(x),u(x) \rangle P(u;x)
\end{equation}
holds in $\R^n$. The remaining of the proof proceeds as in \cite{caffarelli-garofalo}. We consider $$P_{M}:=\sup \{P(u;x) \mid u \in \mathcal{F}_M, \ x \in \R^n \}$$ and suppose by contradiction that $P_{M}> 0$. Note that for $u \in \mathcal{F}_M$, $|\nabla u|$ is uniformly bounded, since $u$ and $\Delta u$ are uniformly bounded (cf. \cite{gilbarg-trudinger}, \S 3.4 p.37), and thus, $P_M$ is finite. By definition of $P_M$, there exist two sequences $(u_k)$ in $\mathcal{F}_M$ and $(x_k)$ in $\R^n$ such that $P(u_k;x_k) \to P_M$ as $k \to \infty$. Setting $v_k(x):=u_k(x+x_k)$, one can see that the sequence $(v_k)$ belong to $\mathcal{F}_M$ (since \eqref{system0} is translation invariant), and $P(v_k;0)=P(u_k;x_k) \to P_M$ as $k \to \infty$. Thanks to the fact that the first derivatives of the solutions in $\mathcal{F}_M$ satisfy a uniform bound and are equicontinuous on bounded domains (cf. Theorem 3.1. in \cite{caffarelli-garofalo} or Corollary 6.3 p.93 in \cite{gilbarg-trudinger}), one can apply the theorem of Ascoli-Arzela and deduce via a diagonal argument the existence of a solution $v \in F_{M}$, such that $P(v;0)=P_{M}$. Applying then the maximum principle to $P(v;x)$ (cf. \eqref{inequality P(u)} and Hypothesis (i)), one can see that $P(v;x)\equiv P_M$. In addition, $B \equiv 0$ and $\langle (AD^{-1}+D^{-1}A)v(x),v(x) \rangle P_M \equiv 0$. As a consequence, $v \equiv v_0$ is constant, and since $v$ is a solution, it follows that $\langle Av_0,v_0 \rangle=1$ or $v_0=0$. Thus $P_M \leq 0$, and we have proved that for every $u \in \mathcal{F}_M$ and every $x \in \R^n$:
\begin{equation}\label{gradient estimate theorem0}
  \sum_{j=1}^m \frac{\nu_j}{2} |\nabla u^j(x) |^2 \leq \frac{\lambda}{2} [1-\langle Au(x),u(x) \rangle] \Rightarrow \langle Au(x),u(x) \rangle \leq 1.
\end{equation}
To finish the proof, suppose that $\langle Au(x_0),u(x_0) \rangle = 1$ for a solution $u \in \mathcal{F}_M$, and for some $x_0 \in \R^n$. According to what precedes, $\max_{x \in \R^n}P(u;x)=P(u;x_0)=0$. Thus, by the maximum principle, we deduce as before that $P(u;x) \equiv 0$, and $B \equiv 0$, which implies that $u$ is constant.
\end{proof}

\begin{theorem}\label{theorem1}
Let $W \in C^{2,\alpha}(\R^m,\R)$ (with $0<\alpha<1$)\footnote{ This regularity assumption on $W$ ensures that every classical solution $u$ of \eqref{elliptic system} is $C^{3,\alpha}$ smooth (cf. Theorem 6.17 p.109 in \cite{gilbarg-trudinger}). As a consequence, we can compute the second derivatives of the $P$-function defined below.} be such that for some constant $R>0$:
\begin{equation}\label{condition ball}
u \in \R^m, \ |u| > R \Rightarrow u \cdot \nabla W(u) > 0.
\end{equation}
Then, if $u \in C^2(\R^n ; \R^m) \cap L^{\infty}(\R^n ; \R^m)$ is an entire solution of \eqref{elliptic system}, we have
\begin{equation}\label{estimate theorem1 statement}
  \frac{1}{2} |\nabla u(x) |^2 \leq C (R^2-|u(x)|^2),
\end{equation}
for a constant $C(W, \left\| u \right\|_{L^{\infty}(\R^n;\R^m)})>0$. In particular, 
$|u(x)|\leq R$, $\forall x \in \R^n$, and if $u$ is not constant, then  $|u(x)|< R$, $\forall x \in \R^n$.
\end{theorem}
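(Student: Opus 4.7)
The plan is to mirror the argument of Theorem~\ref{theorem0} via a well-chosen $P$-function. Set
$$P(u;x) := \tfrac{1}{2}|\nabla u(x)|^2 + \tfrac{\lambda}{2}\bigl(|u(x)|^2 - R^2\bigr),$$
with $\lambda>0$ to be fixed; note that $P\leq 0$ is equivalent to the desired estimate~\eqref{estimate theorem1 statement} (with $C=\lambda/2$), and already encodes the bound $|u|\leq R$. Using $\Delta u = \nabla W(u)$, a direct differentiation gives
$$\Delta P = B + \sum_{k=1}^n u_{x_k}^\top D^2 W(u)\, u_{x_k} + \lambda\,|\nabla u|^2 + \lambda\, u\cdot\nabla W(u),$$
where $B:=\sum_{j,i,k}(u^j_{x_i x_k})^2\geq 0$. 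Choosing $\lambda$ larger than $\sup_{|v|\leq\|u\|_{\infty}}\|D^2W(v)\|$ absorbs the Hessian term into $\lambda|\nabla u|^2$ and leaves
$$\Delta P \geq B + \lambda\, u\cdot\nabla W(u).$$

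From here I would run the Caffarelli--Garofalo scheme exactly as in Theorem~\ref{theorem0}. Fix $M>0$ and let $\mathcal{F}_M$ denote the family of entire solutions of \eqref{elliptic system} with $\|u\|_{\infty}\leq M$; by Schauder theory (using the $C^{2,\alpha}$ regularity of $W$) these have uniform $C^{3,\alpha}$ bounds, so $P_M:=\sup_{\mathcal{F}_M,\R^n} P$ is finite. Suppose for contradiction $P_M>0$. By translation invariance of \eqref{elliptic system} and an Ascoli--Arzel\`a extraction, the supremum is attained: some $v\in\mathcal{F}_M$ satisfies $P(v;0)=P_M$. The interior-maximum inequality $\Delta P(v;0)\leq 0$ combined with the estimate above forces $B(v;0)+\lambda\, v(0)\cdot\nabla W(v(0))\leq 0$, hence $v(0)\cdot\nabla W(v(0))\leq 0$; by hypothesis~\eqref{condition ball} this yields $|v(0)|\leq R$.

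To close the argument, I would then apply the strong maximum principle to deduce $P\equiv P_M$ and $B\equiv 0$. Since $B\equiv 0$ forces all second derivatives of $v$ to vanish, $v$ is affine; being bounded, it is constant, $v\equiv v_0$, so $\nabla W(v_0)=0$ and $v_0\cdot\nabla W(v_0)=0$. But then $P_M=\tfrac{\lambda}{2}(|v_0|^2-R^2)>0$ forces $|v_0|>R$, contradicting $|v(0)|\leq R$. The strict inequality $|u|<R$ for non-constant $u$ follows: if $|u(x_0)|=R$ for some $x_0$, the just-proved estimate gives $\nabla u(x_0)=0$, and the maximum-principle argument applied at $x_0$ shows that $u$ must be constant.

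The main obstacle is justifying the strong maximum principle step. In Theorem~\ref{theorem0} the key inequality has the form $\Delta P\geq B+h(x)P$ with $h(x)=\langle(AD^{-1}+D^{-1}A)u,u\rangle\geq 0$ arising from hypothesis~(i); here one must extract an analogous nonnegative pointwise factor from $\Delta P\geq B+\lambda\, u\cdot\nabla W(u)$ using only~\eqref{condition ball}, which controls $u\cdot\nabla W(u)$ solely on $\{|u|>R\}$. This reduction is the delicate ingredient of the proof.
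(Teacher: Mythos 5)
Your setup is exactly the paper's: the same $P$-function $\tfrac12|\nabla u|^2+\tfrac{\lambda}{2}(|u|^2-R^2)$ and the same Caffarelli--Garofalo compactness scheme. But the step you flag at the end as ``the delicate ingredient'' is a genuine gap, and as written your argument cannot close. Knowing only that $|v(0)|\leq R$ at the maximum point does not give $P_M\leq 0$, because $P(v;0)$ also contains $\tfrac12|\nabla v(0)|^2$, which could be large; you really do need the inequality $\Delta P\geq B+hP$ with $h\geq 0$ in order to invoke the strong maximum principle and conclude $P\equiv P_M$, $B\equiv 0$. Moreover, your absorption ``choose $\lambda$ larger than $\sup\|D^2W\|$'' throws away precisely the leftover term $(\lambda-\mu)|\nabla u|^2$ that is needed to reconstitute the $\tfrac12|\nabla u|^2$ part of $P$ on the right-hand side.

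The missing lemma is a quantitative version of \eqref{condition ball}: there exists $\kappa>0$ such that
\begin{equation*}
u\cdot\nabla W(u)\ \geq\ \kappa\,(|u|^2-R^2)\qquad\text{for all }|u|\leq R.
\end{equation*}
This holds because $g(u):=u\cdot\nabla W(u)$ is $C^1$, satisfies $g\geq 0$ on the sphere $|u|=R$ (by continuity from \eqref{condition ball}), and is Lipschitz with some constant $L$ on the closed ball, so that $g(u)\geq g(Ru/|u|)-L(R-|u|)\geq -\tfrac{L}{R}(R^2-|u|^2)$; take $\kappa=L/R$. With $\mu$ a lower bound for $-D^2W$ on $\{|v|^2\leq M\}$, choose the coefficient $\lambda=\kappa+\mu$ and keep the surplus gradient term: on $\{|u|<R\}$ one gets
\begin{equation*}
\Delta P\ \geq\ B+\kappa|\nabla u|^2+(\kappa+\mu)\,\kappa\,(|u|^2-R^2)\ =\ B+2\kappa P,
\end{equation*}
while on $\{|u|\geq R\}$ one has $\Delta P\geq B\geq 0$ directly from \eqref{condition ball}. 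Hence $\Delta P\geq B+hP$ with $h=2\kappa$ on $\{|u|<R\}$ and $h=0$ elsewhere, which is nonnegative and bounded; the strong maximum principle then yields $P\equiv P_M$, $B\equiv 0$, and the contradiction exactly as you outline, as well as the rigidity statement when $|u(x_0)|=R$.
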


\begin{proof}
Following Caffarelli et al., let
$$\mathcal{F}_{M}=\{ u \text{ is an entire solution of \eqref{elliptic system}} \mid \left\| u \right\|^2_{L^{\infty}(\R^n;\R^m)}\leq M \},$$
where $M>0$ is an arbitrary constant.
There exists a constant $\mu >0$ such that
\begin{equation}\label{constant mu}
  \forall u \in \R^m \text{ with } |u|^2 \leq M, \ \forall \xi \in \R^m: \ D^2W(u)(\xi,\xi)\geq -\mu |\xi|^2.
\end{equation}
We can also check that there exists a constant $\kappa >0$ such that
\begin{equation}\label{constant kappa}
 \forall u \in \R^m \text{ with } |u| \leq R: \ u \cdot \nabla W(u) \geq \kappa (|u|^2-R^2).
\end{equation}
For every $u \in \mathcal{F}_M$ we define
$$P(u;x):= \frac{1}{2} |\nabla u(x) |^2 +\frac{\kappa + \mu}{2} (|u(x)|^2-R^2).$$
We set $B:=\left( \sum_{i,j=1}^n |u_{x_i x_j}|^2 \right)$, and compute
\begin{align}\label{max principle}
\Delta P(u;x)&=B+\sum_{i=1}^n  D^2W(u)(u_{x_i},u_{x_i})+(\kappa+\mu)( |\nabla u|^2+ u \cdot \nabla W(u)) \nonumber \\
&\geq B+\kappa |\nabla u|^2+(\kappa+\mu) u \cdot \nabla W(u)   \    \text{  (cf. \eqref{constant mu})}.  \nonumber
\end{align}
Thus
\begin{equation}
\Delta P(u;x)\geq
\begin{cases}
B \geq 0, &\text{ if } |u(x)|\geq R \ \text{  (cf. \eqref{condition ball})} \\
B+ 2 \kappa P(u;x), &\text{ if } |u(x)|< R \ \text{  (cf. \eqref{constant kappa})}, \nonumber
\end{cases}
\end{equation}
and setting
\begin{equation}
h(u;x):=
\begin{cases}
0, &\text{ if } |u(x)|\geq R \\
2 \kappa, &\text{ if } |u(x)|< R, \nonumber
\end{cases}
\end{equation}
one can see that
\begin{equation}\label{max principle theorem1}
  \Delta P(u;x)\geq B+ h(u;x) P(u;x),
\end{equation}
with $h(u; \cdot) \in L^{\infty}(\R^n,\R)$, and non-negative.
Next, we consider $$P_{M}:=\sup \{P(u;x) \mid u \in \mathcal{F}_M, \ x \in \R^n \}$$ and suppose by contradiction that $P_{M}> 0$. Proceeding as in \cite{caffarelli-garofalo} and in Theorem \ref{theorem0}, we prove the existence of a solution $v \in F_{M}$, such that $P(v;0)=P_{M}$. Thanks to \eqref{max principle theorem1}, we can apply the maximum principle to $P(v;x)$, and deduce successively that $P(v;x)\equiv P_M$, $B=0$ and $v$ is constant. Utilizing \eqref{condition ball}, one can also see that $|v| \leq R$, and thus $P_M \leq 0$. This proves that for every $u \in \mathcal{F}_M$ and every $x \in \R^n$:
\begin{equation}\label{estimate theorem1}
  \frac{1}{2} |\nabla u(x) |^2 \leq \frac{\kappa + \mu}{2} (R^2-|u(x)|^2)\Rightarrow |u(x)| \leq R.
\end{equation}
To finish the proof, suppose that $|u(x_0)|= R$ for a solution $u \in \mathcal{F}_M$, and for some $x_0 \in \R^n$. According to what precedes, $\max_{x \in \R^n}P(u;x)=P(u;x_0)=0$. Thus, by the maximum principle, we deduce as before that $P(u;x) \equiv 0$, $B \equiv 0$, and $u$ is constant.
\end{proof}

\begin{remark}
Condition \eqref{condition ball} is satisfied by the symmetric phase transition potentials $W:\R^2 \simeq \C \to \R$, $W(z)=|z^N-1|^2$, with $z \in \C$ and $N \geq 2$. Indeed,
$$z \cdot \nabla W(z)=2N \mathrm{Re}(z^N(\overline{z}^N-1))\geq 2N|z|^N(|z|^N-1).$$
Clearly, Theorem \ref{theorem1} or Theorem \ref{theorem0} (with $A$ and $D$ the identity map of $\R^m$) also apply for the Ginzburg-Landau potential $W:\R^m \to \R$, $W(u)=\frac{1}{4}(|u|^2-1)^2$. In these particular cases, the solutions $u \in C^2(\R^n ; \R^m) \cap L^{\infty}(\R^n;\R^m)$ of \eqref{elliptic system}, satisfy $|u(x)|\leq 1$, $\forall x \in \R^n$. Furthermore, if $u$ is not constant, then  $|u(x)|< 1$, $\forall x \in \R^n$, and thus, the Liouville theorem holds: if $W(u(x_0))=0$ for some $x_0 \in \R^n$, then $u$ is a constant. Note that for the Ginzburg-Landau system, there is a stronger result: it is proved in \cite{farina1} that any distributional solution without any boundedness assumption, is necessarily bounded in modulus by $1$.
\end{remark}

\begin{remark}
If we just want to prove the confinement of all bounded solutions in a determined region (without obtaining a gradient estimate), a simpler $P$-function can be chosen. Let us for instance consider the solutions  $u \in C^2(\R^n ; \R^m) \cap L^{\infty}(\R^n;\R^m)$ of the system $\Delta u =F(u)$, with $F \in C^\alpha(\R^m;\R^m)$, and let $P\in C^2(\R^m,\R)$ be a function such that
$$P(u) >0 \Rightarrow  \text{(i) }\langle \nabla P(u),F(u)\rangle>0 \text{ and (ii) } D^2P(u)(\xi,\xi)\geq 0, \ \forall \xi \in \R^m.$$
Then, $P(u(x))\leq 0$, $\forall x \in \R^n$. Indeed, reproducing the previous arguments, we construct in the corresponding class $ \mathcal{F}_M$, a solution $v \in C^2(\R^n ; \R^m) \cap L^{\infty}(\R^n;\R^m)$ of $\Delta v =F(v)$, such that  $P(v(0))=P_M:=\sup \{P(u(x)) \mid u\in \mathcal{F}_M,\ x \in \R^n \}$. Thanks to (i) and  (ii), we have 
\begin{align}
\big(\Delta P(v)\big)(x)&=\langle \nabla P(v(x)),F(v(x))\rangle+\sum_{i=1}^n  D^2P(v(x))(v_{x_i}(x),v_{x_i}(x)) \nonumber\\
&>0 \text{, if $P(v(x))>0$,} \nonumber
\end{align}
which implies that $P_M=P(v(0))\leq 0$.

As an application, we can take for $P$ the distance $d$ to a convex and compact subset $K\subset \R^m$, with $C^2$ boundary. The distance is convex outside $K$, and can be extended smoothly in the interior of $K$ in such a way that $d(u,K)\leq 0$ if and only if $u \in K$. With this choice of $P$, we deduce that if $ \langle \nabla d(u),F(u)\rangle>0$ for $u \notin K$, then $u(\R^n)\subset K$ for every solution $u \in C^2(\R^n ; \R^m) \cap L^{\infty}(\R^n;\R^m)$ of $\Delta u =F(u)$.

Considering again the multiple well potential $W:\R^2  \to \R$, $W(u)=\prod_{i=1}^N|u-a_i|^2$ ($N \geq 3$), where the points $a_1,\ldots,a_N$ define a convex and closed polygon $K \subset \R^2$, one can prove that $u(\R^n)\subset K$ for every solution $u \in C^2(\R^n ; \R^m) \cap L^{\infty}(\R^n;\R^m)$ of \eqref{elliptic system}. To see this, take $P(u)=\langle u-a_k, r \rangle$, where $r$ is the outer unit normal vector to an edge of $K$ containing the vertex $a_k$. Clearly, $P$ is convex, and we easily check that $\langle \nabla W(u),r\rangle=2\sum_{i=1}^N \Big(\langle u-a_i,r \rangle \prod_{j\neq i}|u-a_j|^2\Big)>0$, when $\langle u-a_k, r \rangle>0$.
\end{remark}
Now, we are going to improve estimate \eqref{estimate theorem1 statement} for system \eqref{elliptic system} with the Ginzburg-Landau potential $W:\R^m \to \R$, $W(u)=\frac{1}{4}(|u|^2-1)^2$:
\begin{equation}\label{GL system}
  \Delta u=(|u|^2-1)u.
\end{equation}
\begin{theorem}\label{theorem2}\footnote{This improved theorem was suggested to me by Prof. Farina in a personal
 communication.}
For every non-constant solution $u \in C^2(\R^n ; \R^m)$ of \eqref{GL system}, we have for every $x \in \R^n$, $|u(x)|<1$, and the following estimate holds:
\begin{align}\label{estimate3}
\frac{1}{2}|\nabla u(x)|^2 < \sqrt{W(u)}=\frac{1}{2}(1-|u|^2).
\end{align}
\end{theorem}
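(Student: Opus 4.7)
The plan is to apply the Caffarelli--Garofalo $P$-function scheme of Theorems~\ref{theorem0} and~\ref{theorem1}, but with the sharper choice
$$P(u;x):=\tfrac{1}{2}|\nabla u(x)|^2+\tfrac{1}{2}\bigl(|u(x)|^2-1\bigr),$$
so that $P(u;x)\leq 0$ is precisely $\tfrac12|\nabla u|^2\leq\sqrt{W(u)}$ whenever $|u|\leq 1$. I would begin by invoking the Farina result cited after Theorem~\ref{theorem1}: any $C^2$ solution of \eqref{GL system} on $\R^n$ automatically satisfies $|u|\leq 1$, which lets me work in the bounded class $\mathcal{F}:=\{u\in C^2(\R^n;\R^m):u\text{ solves \eqref{GL system}}\}\subset L^\infty(\R^n;\R^m)$ without any extra hypothesis.

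The heart of the argument is a clean identity for $\Delta P$. Differentiating $\Delta u^\alpha=(|u|^2-1)u^\alpha$ in $x_i$ and using the standard Bochner-type expansion, I would obtain
$$\Delta\bigl(\tfrac12|\nabla u|^2\bigr)=B+2\sum_{i=1}^n(u\cdot u_{x_i})^2+(|u|^2-1)|\nabla u|^2,$$
with $B:=\sum_{\alpha,i,j}(u^\alpha_{x_ix_j})^2\geq 0$, while a direct computation gives $\Delta\bigl(\tfrac12(|u|^2-1)\bigr)=|\nabla u|^2+(|u|^2-1)|u|^2$. Summing the two and observing the cancellations $(|u|^2-1)|\nabla u|^2+|\nabla u|^2=|u|^2|\nabla u|^2$ and $|u|^2|\nabla u|^2+(|u|^2-1)|u|^2=2|u|^2 P$ yields the key identity
$$\Delta P=B+2\sum_{i=1}^n(u\cdot u_{x_i})^2+2|u|^2 P\,\geq\,2|u|^2 P.$$
The decisive feature is that the coefficient $h(x):=2|u(x)|^2$ is \emph{nonnegative}, which is exactly what the maximum-principle step needs.

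With this inequality in hand, I would run the proofs of Theorems~\ref{theorem0}--\ref{theorem1} almost verbatim. Set $P^\star:=\sup\{P(u;x):u\in\mathcal{F},\,x\in\R^n\}$ and assume, for contradiction, $P^\star>0$. Translation invariance of \eqref{GL system} together with the uniform interior gradient estimates and Ascoli--Arzela produce a solution $v\in\mathcal{F}$ with $P(v;0)=P^\star$; applying the strong maximum principle to $\Delta P(v)\geq 2|v|^2 P(v)$ at this interior maximum forces $P(v;\cdot)\equiv P^\star$, and hence $B\equiv 0$, so $v$ is affine and thus constant by boundedness. But for constant $v$ with $|v|\leq 1$, $P(v)=\tfrac12(|v|^2-1)\leq 0$, contradicting $P^\star>0$. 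Therefore $P^\star\leq 0$, which is the non-strict form of \eqref{estimate3}.

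The strict inequality and the bound $|u(x)|<1$ for non-constant $u$ follow from a second application of the strong maximum principle: if $P(u;x_0)=0$ at some $x_0$, then $P(u;\cdot)$ attains its global maximum $0$ there, the inequality $\Delta P\geq 2|u|^2P$ forces $P(u;\cdot)\equiv 0$, and the identity above yields $B\equiv 0$, so $u$ is constant. Consequently, for non-constant $u$ one has $P(u;x)<0$ everywhere, i.e.\ $|\nabla u|^2<1-|u|^2$, which in particular rules out $|u(x)|=1$. I expect the only non-routine step to be spotting the algebraic combination that produces the factor $2|u|^2 P$ in $\Delta P$; once that identity is visible, the remainder is the standard Caffarelli--Garofalo machinery already deployed twice in the paper.
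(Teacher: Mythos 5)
Your proposal is correct and follows essentially the same route as the paper: the paper's $P$-function is $P=\tfrac12|\nabla u|^2+Q(u)$ with $Q(u)=\tfrac12(|u|^2-1)$, which is exactly your choice, and the paper derives the same inequality $\Delta P\geq B+2(2Q+1)P=2|u|^2P$ (your version keeps the extra nonnegative term $2\sum_i(u\cdot u_{x_i})^2$ as an identity before discarding it) and then runs the identical Caffarelli--Garofalo compactness and strong maximum principle argument, including the appeal to Farina's result for $|u|\leq 1$ and the final maximum-principle step for strictness.
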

\begin{proof}
Setting $Q:\R^m \to \R$, $Q(u)=\frac{|u|^2-1}{2}$, we check that $\forall u,\xi \in \R^m$:
$$|\nabla W(u)|^2=(|u|^2-1)^2|u|^2=4(Q(u))^2(2Q(u)+1),$$
$$u \cdot \nabla W(u)=(|u|^2-1)|u|^2=2Q(u)(2Q(u)+1),$$
$$D^2W(u)(\xi,\xi)=2 \langle\xi,u \rangle^2+(|u|^2-1)|\xi|^2 \geq 2 Q(u)|\xi|^2,$$
(where $\langle\cdot,\cdot \rangle$ or $\cdot$ denotes the Euclidean inner product).
Then, we proceed as before. Since the image of every solution $u$ lies in the unit ball (cf. \cite{farina1}), we consider $$\mathcal{F}_{1}=\{ u \text{ is an entire solution of \eqref{GL system}} \mid \left\| u \right\|_{L^{\infty}(\R^n,\R^m)}\leq 1\}.$$
and define $$P(u;x)=\frac{1}{2}|\nabla u(x)|^2 +Q(u(x)).$$ Let $P_{1}:=\sup \{P(u;x) \mid u \in \mathcal{F}_{1}, \ x \in \R^n \}$, and suppose by contradiction that $P_{1}> 0$. We set $B:=\left( \sum_{i,j=1}^n |u_{x_i x_j}|^2 \right)$, and compute
\begin{align}\label{max principle2}
\Delta P(u;x)&=B+\sum_{i=1}^n D^2W(u)(u_{x_i},u_{x_i})+ |\nabla u|^2+ u \cdot \nabla W(u) \nonumber \\
&\geq B+2Q(u)|\nabla u|^2+|\nabla u|^2+(2Q(u)+1) 2Q(u) \nonumber \\
&\geq B+2(2Q(u)+1)P(u;x) \geq 2 |u|^2P(u;x).
\end{align}
Proceeding as in \cite{caffarelli-garofalo}, we then prove the existence of a solution $v \in \mathcal{F}_{1}$, such that $P(v;0)=P_{1}$. Thanks to \eqref{max principle2} we can apply the maximum principle to $P(v;x)$, and deduce successively that $P(v;x)\equiv P_{1}$, $B=0$ and $P_{1}\leq 0$. 
 Thus we have proved that for every $u \in \mathcal{F}_1$ and every $x \in \R^n$:
\begin{equation}\label{estimate3large}
 \frac{1}{2}|\nabla u(x)|^2  \leq \sqrt{W(u)}=\frac{1}{2}(1-|u|^2). \nonumber
\end{equation}
By applying again the maximum principle, one can see that this inequality is strict, except for constant solutions $u \equiv u_0$ such that $|u_0|=1$.
\end{proof}

In the particular case where $n=1$, we give an even more precise result.
\begin{theorem}\label{theorem2b}
For every non-constant solution $u \in C^2(\R ; \R^m)$ of the O.D.E.
\begin{equation}\label{ODE GL}
\frac{\dd^2 u}{\dd x^2}=(|u|^2-1)u,
\end{equation}
we have for every $x \in \R^n$, $|u(x)|<1$, and the following estimate holds:
\begin{equation}\label{estimate n=1}
\frac{1}{2} \Big |\frac{\dd u}{\dd x}(x) \Big|^2 \leq
\begin{cases}
|u|^2 \sqrt{W(u)} &\text{ for } |u|^2 \geq \frac{2}{3} \smallskip \\
W(u)+\frac{1}{12} &\text{ for } |u|^2 \leq \frac{2}{3}. 
\end{cases}  
\end{equation}
In other words, the Hamiltonian $H=\frac{1}{2}|u_x|^2-W(u)$ of $u$ is less or equal than $\frac{1}{12}$, and if $S:=\sup_\R|u(x)|^2>\frac{2}{3}$, then $H\leq \frac{1}{4}(1-S)(3S-1).$
\end{theorem}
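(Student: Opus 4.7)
The plan is to exploit the one-dimensional conservation law: multiplying \eqref{ODE GL} by $u_x$ and integrating shows the Hamiltonian $H=\tfrac{1}{2}|u_x|^2-W(u)$ is a real constant along $u$. Since Theorem \ref{theorem2} already yields $|u(x)|<1$ everywhere, a short algebraic manipulation shows that the two cases of \eqref{estimate n=1} are equivalent to the single inequalities $H\leq g(|u(x)|^2)$ at points where $|u(x)|^2\geq 2/3$, and $H\leq \tfrac{1}{12}$ otherwise, where $g(s):=\tfrac{1}{4}(1-s)(3s-1)$. The entire proof therefore reduces to bounding the single constant $H$.

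For this I would introduce $\phi(x):=|u(x)|^2$ and $S:=\sup_{\R}\phi\in(0,1]$. A direct computation using $u\cdot u_{xx}=\phi(\phi-1)$, together with the substitution $|u_x|^2=2H+\tfrac{1}{2}(\phi-1)^2$ coming from the conservation law, gives the key identity
\begin{equation*}
\phi''(x)=4H+(\phi(x)-1)(3\phi(x)-1).
\end{equation*}
If $\phi$ attains its supremum $S$ at some $x_0$, then $\phi''(x_0)\leq 0$ immediately forces $H\leq g(S)$, and the proof is essentially finished.

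The main obstacle, as I see it, is the case when $S$ is only a supremum and not attained. I would handle it by the usual translation-compactness argument: choose $x_k$ with $\phi(x_k)\to S$; since $u$, $u_x$ and $u_{xx}$ are uniformly bounded on $\R$, the translates $u(\cdot+x_k)$ are precompact in $C^2_{\mathrm{loc}}(\R;\R^m)$, and any subsequential limit $v$ is again an entire solution of \eqref{ODE GL} sharing the same Hamiltonian $H$, with $|v|^2\leq S$ and $|v(0)|^2=S$. Applying the identity above to $v$ at $x=0$ then yields $H\leq g(S)$. In the borderline case $S=1$ one has $|v(0)|=1$, so by Theorem \ref{theorem2} the limit $v$ must be a constant of modulus one, giving $H=H_v=0=g(1)$ and so the same inequality.

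Once $H\leq g(S)$ is established, \eqref{estimate n=1} follows from elementary properties of $g$: a one-variable calculus check shows $g$ attains its maximum $1/12$ on $[0,1]$ at $s=2/3$, whence $H\leq 1/12$ unconditionally and the second line of \eqref{estimate n=1} holds; moreover $g$ is decreasing on $[2/3,1]$, so at any $x$ with $|u(x)|^2\geq 2/3$ we have $|u(x)|^2\leq S$ and therefore $H\leq g(S)\leq g(|u(x)|^2)$, which unwinds exactly to the first line of \eqref{estimate n=1}.
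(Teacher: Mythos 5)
Your proof is correct, and it takes a genuinely different route from the paper's. The paper stays within the $P$-function framework of Theorem \ref{theorem2}: it sets $P_\epsilon(u;x)=\frac{1}{2}|u_x|^2-W(u)+\phi_\epsilon(Q(u))$ for a family of increasing convex functions $\phi_\epsilon$ satisfying $3s^2+s\ge\phi_\epsilon(s)$ on $[-1/2,0]$, derives the differential inequality $\frac{\dd^2 P_\epsilon}{\dd x^2}\ge 2\phi_\epsilon'(Q(u))\,P_\epsilon$, runs the Caffarelli--Garofalo--Segala translation/compactness/maximum-principle argument over the class $\mathcal{F}_1$ to conclude $P_\epsilon\le 0$, and finally lets $\epsilon\to 0$. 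You instead exploit the fact that in one dimension the quantity to be estimated is governed by a single conserved constant $H$, reduce both lines of \eqref{estimate n=1} to the bound $H\le g(|u(x)|^2)$ with $g(s)=\frac{1}{4}(1-s)(3s-1)$, and obtain $H\le g(S)$ from the second-derivative test on $\phi=|u|^2$ at its supremum via the identity $\phi''=4H+(\phi-1)(3\phi-1)$. The underlying computation is essentially the same --- since $\frac{1}{2}|u_x|^2-W(u)$ is constant along trajectories, the paper's $\frac{\dd^2P_\epsilon}{\dd x^2}$ is nothing but the second derivative of $\phi_\epsilon(Q(u))$ --- but your logical scaffolding is lighter: one compactness step (to realize $S$ as an attained maximum of a translate limit, with the borderline case $S=1$ handled by Theorem \ref{theorem2}, or just as well by the same identity applied to the limit) replaces the supremum over the whole class $\mathcal{F}_1$, the maximum principle for the differential inequality, and the regularization $\phi_\epsilon\to\phi$. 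Your version also yields the slightly sharper conclusion $H\le g(S)$ for every value of $S$, which improves on $H\le 1/12$ when $S<2/3$; what the paper's version buys in exchange is a template that does not rely on the conservation law and therefore carries over to $n\ge 2$, as in Theorem \ref{theorem2}.
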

\begin{proof}
We repeat the proof of Theorem \ref{theorem2} with another choice of the $P$-function. 
We define $$P(u;x)=\frac{1}{2}\Big|\frac{\dd u}{\dd x}(x)\Big|^2 -W(u(x))+\phi(Q(u(x))),$$ where $\phi \in C^2([-1/2,0],\R)$ is strictly increasing and convex.
Next, we compute
\begin{align}\label{computation Delta P2b}
\frac{\dd^2 P}{\dd x^2}(u;x)&=\phi''(Q(u)) \langle u_{x},u \rangle^2 +\phi'(Q(u)) (| u_x|^2+ u \cdot \nabla W(u)) \nonumber \\
&\geq 2\phi'(Q(u))\frac{|u_x|^2}{2}+2\phi'(Q(u))(2Q^2(u)+Q(u)).
\end{align}
If, in addition, the function $\phi$ satisfies $3s^2+s \geq \phi(s)$, $\forall s \in [-1/2,0]$, then we have
\begin{equation}\label{max principle2b}
  \frac{\dd^2 P}{\dd x^2}(u;x)\geq h(u;x) P(u;x), \text{ with } h(u;x):=2\phi'(Q(u(x)))> 0.
\end{equation}
We construct a sequence of functions $\phi_{\epsilon}$ as follows. First, we define for every $\epsilon>0$ an increasing function $\rho_{\epsilon} \in C^{\infty}(\R,\R)$ such that
\begin{equation}
\rho_{\epsilon}(t)=
\begin{cases}
t &\text{ for } t \geq 2 \epsilon, \smallskip \\
\epsilon &\text{ for } t \leq 0,
\end{cases}   \nonumber
\end{equation}
and $\rho_{\epsilon}(t) \geq t$, $\forall t \in \R$. Then, we set $\phi_{\epsilon}(s):=\int_0^s \rho_{\epsilon}(6t+1) \dd t$ and check that this sequence has all the aforementioned properties. We also note that as $\epsilon \to 0$, $\phi_{\epsilon}$ converges uniformly on the interval $[-1/2,0]$ to the function
\begin{equation}
\phi(s)=
\begin{cases}
3s^2+s &\text{ for } s \geq -1/6, \smallskip \\
-1/12 &\text{ for } s \leq -1/6.
\end{cases}   \nonumber
\end{equation}
Proceeding as in Theorem \ref{theorem2}, we prove that $$P_{\epsilon}(u;x):=\frac{1}{2}\Big|\frac{\dd u}{\dd x}(x)\Big|^2 -W(u(x))+\phi_\epsilon(Q(u(x))) \leq 0,$$ and letting $\epsilon \to 0$ we obtain \eqref{estimate n=1}.
\end{proof}
\begin{remark}
With the help of the periodic solutions of the O.D.E. \eqref{ODE GL} that we mentioned in Remark \ref{remark1}, we are going to check the sharpness of estimates \eqref{estimate3} and \eqref{estimate n=1}. For every $0<R<1$, $$u_R:\R \to \C \simeq \R^2\subset \R^m \ (m \geq 2), \
u_R(x)=R e^{i \sqrt{1-R^2} x},$$ is a solution of \eqref{ODE GL}, and clearly $$\Big |\frac{\dd u_R}{\dd x} \Big|^2=|u_R(x)|^2(1-|u_R(x)|^2).$$ Thus, estimate \eqref{estimate n=1} is optimal for $|u|^2 \geq 2/3$, and estimate \eqref{estimate3} is sharp asymptotically, since $\frac{1}{2}\big |\frac{\dd u_R}{\dd x} \big|^2 \sim \sqrt{W(u_R)}$, as $R \to 1$. Also note that due to the existence of an heteroclinic connection, we have the following lower bound:

\begin{equation}\label{lower bound ODE}
\sup \left \{\frac{1}{2}\Big |\frac{\dd u}{\dd x} \Big|^2 : \: \text{$u$ solution of the O.D.E. \eqref{ODE GL}} \right \}\geq
\begin{cases}
|u|^2 \sqrt{W(u)} &\text{ for } |u|^2 \geq \frac{1}{3} \smallskip \\
W(u) &\text{ for } |u|^2 \leq \frac{1}{3}. \smallskip
\end{cases}  
\end{equation}
\end{remark}

The next Theorem applies in the case of phase transition potentials with $N$ non-degenerate zeros, since in a neighborhood of each of these minima the potential is convex. Note that the Ginzburg-Landau potential $W(u)=\frac{1}{4}(|u|^2-1)^2$ that we considered before, is nowhere convex inside the unit ball.
\begin{theorem}\label{theorem3}
 Let $W \in C^{2,\alpha}(\R^m,\R)$ (with $0<\alpha<1$) be a non-negative potential which is convex in the closed set $F \subset \R^m$ (that is, $D^2W(u)(\xi,\xi) \geq 0$, $\forall u \in F$, $\forall \xi \in \R^m$). Let $u \in C^2(\R^n ; \R^m) \cap L^{\infty}(\R^n ; \R^m)$ be an entire solution of \eqref{elliptic system}. We set
$$\epsilon:=\inf_{\R^m \setminus F} W,$$ 
$$S:=\sup_{u^{-1}(\R^m \setminus F)} \| \nabla u   \|^2 .$$
Then, if $0<S<\frac{2 \epsilon}{n}$, the following estimate holds:$$\frac{n}{2}|\nabla u(x)|^2 \leq \frac{\epsilon}{S}|\nabla u(x)|^2 \leq W(u(x)),\  \forall x \in \R^n.$$ In addition, if $S=0$ or $u(\R^n) \subset F$, then $u$ is constant.
\end{theorem}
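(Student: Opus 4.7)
The plan is to adapt the Caffarelli--Garofalo $P$-function method used in Theorems \ref{theorem0}--\ref{theorem2}. Setting $\lambda := \epsilon/S$, the hypothesis $S<2\epsilon/n$ gives $\lambda > n/2 \geq 1/2$, and I will prove the stronger of the two claimed inequalities, namely $P(u;x)\leq 0$ for every $x\in\R^n$, where
$$P(u;x):=\lambda|\nabla u(x)|^2-W(u(x));$$
the weaker bound $\frac{n}{2}|\nabla u|^2 \leq \frac{\epsilon}{S}|\nabla u|^2$ is then automatic from $\lambda \geq n/2$.

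The central computation is direct: differentiating $\Delta u=\nabla W(u)$ and writing $B:=\sum_{i,j,k}|u^j_{x_ix_k}|^2$, one gets
$$\Delta P = 2\lambda B + (2\lambda-1)\sum_{i=1}^n D^2W(u)(u_{x_i},u_{x_i}) - |\nabla W(u)|^2.$$
I will then use the elementary Cauchy--Schwarz inequality $|\nabla W(u)|^2=|\Delta u|^2\leq nB$ to absorb the negative term, obtaining
$$\Delta P \geq \Bigl(\frac{2\lambda}{n}-1\Bigr)|\nabla W(u)|^2 + (2\lambda-1)\sum_{i=1}^n D^2W(u)(u_{x_i},u_{x_i}).$$
Both coefficients are strictly positive because $\lambda>n/2$, so on $u^{-1}(F)$, where $D^2W\geq 0$, we have $\Delta P\geq 0$; on the complement $u^{-1}(\R^m\setminus F)$, the a priori bounds $|\nabla u|^2\leq S$ and $W(u)\geq\epsilon$ yield $P\leq \lambda S-\epsilon=0$ for free.

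Next I will run the contradiction argument from Theorem \ref{theorem0}. Assume $P_M:=\sup_xP(u;x)>0$; a compactness argument on translates of $u$ produces a bounded entire solution $v$ with $P(v;0)=P_M$ that inherits the same bounds on $v^{-1}(\R^m\setminus F)$. Since $P\leq 0$ there, the point $0$ lies in the \emph{interior} of $v^{-1}(F)$ (otherwise a sequence of points outside $v^{-1}(F)$ with $P\leq 0$ would approach $0$), and the strong maximum principle on the connected component $\Omega$ of that interior through $0$ gives $P\equiv P_M$ on $\Omega$. To extend to all of $\R^n$, I argue that any $y\in\partial\Omega$ would satisfy $P(v;y)=P_M>0$ by continuity, hence lie in $\mathrm{int}(v^{-1}(F))$ as well; its component would then coincide with $\Omega$, contradicting $y\notin\Omega$. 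So $\Omega=\R^n$ and $P\equiv P_M$ globally. Tracing equality cases in the two displayed inequalities forces $B\equiv 0$ and $\nabla W(v)\equiv 0$, so $v$ is affine; but a bounded affine map is constant, giving $P_M=-W(v_0)\leq 0$, the desired contradiction.

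Finally, for the assertion ``$S=0$ or $u(\R^n)\subset F$ $\Rightarrow u$ constant'', I observe that in both cases $P\leq 0$ holds automatically on $u^{-1}(\R^m\setminus F)$ for \emph{every} $\lambda>0$, since this set is either empty or $|\nabla u|$ vanishes on it. The argument above then goes through unchanged for every $\lambda\geq n/2$, producing $\lambda|\nabla u(x)|^2\leq W(u(x))$ with $\lambda$ arbitrarily large; sending $\lambda\to\infty$ forces $|\nabla u|\equiv 0$. I expect the main obstacle to be the topological bootstrap from the local identity $P\equiv P_M$ on $\Omega$ to the global one on $\R^n$, precisely because $\Delta P\geq 0$ is only known on $u^{-1}(F)$ rather than everywhere; the rest is a careful bookkeeping of the Cauchy--Schwarz and convexity terms.
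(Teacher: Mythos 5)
Your proposal is correct and follows essentially the same route as the paper's proof: the same $P$-function (up to a factor of $2$ in the normalization of $\lambda$), the same use of $|\Delta u|^2\leq nB$ and of the convexity of $W$ on $F$, the same observation that $P\leq 0$ automatically on $u^{-1}(\R^m\setminus F)$, and the same translation--compactness and maximum-principle argument. Your explicit topological bootstrap from the component $\Omega$ to all of $\R^n$ is a careful spelling-out of the step the paper dispatches with ``by connectedness.''
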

\begin{proof}
We set $\lambda:=\frac{2 \epsilon}{S}$ and assume that $\lambda>n$ and $S>0$. 
We define for every bounded solution $v:\R^n \to \R^m$ of \eqref{elliptic system}, the function
$$P(v;x):= \frac{\lambda }{2} |\nabla v(x) |^2 -W(v(x)).$$
Following Caffarelli et al., let $P_{u}:=\sup_{x \in \R^n} P(u;x)$ and suppose by contradiction that $P_u>0$.
By definition of $P_u$, there exist a sequence $(x_k)$ in $\R^n$ such that $P(u;x_k) \to P_u$ as $k \to \infty$. Setting $v_k(x):=u(x+x_k)$, one can see that
\begin{itemize}
\item[(i)] the sequence $(v_k)$ is uniformly bounded in $\R^n$,
\item[(ii)] all the $v_k$ solve \eqref{elliptic system} (since \eqref{elliptic system} is translation invariant),
\item[(iii)] $\sup_{v_k^{-1}(\R^m \setminus F)} \| \nabla v_k   \|^2 \leq S$,
\item[(iv)] $P(v_k;0)=P(u;x_k) \to P_u$ as $k \to \infty$.
\end{itemize}
Thanks to the fact that the first derivatives of the sequence $(v_k)$ satisfy a uniform bound and are equicontinuous on bounded domains (cf. Theorem 3.1. in \cite{caffarelli-garofalo}), one can apply the theorem of Ascoli-Arzela and deduce via a diagonal argument the existence of a bounded solution $v:\R^n \to \R^m$ of \eqref{elliptic system}, such that $P(v;0)=P_{u}$.
Furthermore, since $v_k \to v$ and $\nabla v_k \to \nabla v$ uniformly on compact sets, we still have
\begin{equation}\label{cond nabla v}
 \sup_{v^{-1}(\R^m \setminus F)} \| \nabla v   \|^2 \leq S, \text{ and } P_{u}=\sup_{x \in \R^n} P(v;x)=P(v;0).
\end{equation}

Now, we set $B:=\left( \sum_{i,j=1}^n |v_{x_i x_j}|^2 \right)$, $A:=\big | \sum_{i=1}^n v_{x_i x_i} \big |^2$ and compute
\begin{align}\label{max principle theorem3}
\Delta P(v;x)&= \lambda B+(\lambda -1)\sum_{i=1}^n  D^2W(v)(v_{x_i},v_{x_i})-A \nonumber \\
&\geq  (\lambda-n) B+( \lambda -1)\sum_{i=1}^n  D^2W(v)(v_{x_i},v_{x_i})   \text{  (since $nB \geq A$)}, \nonumber \\
&\geq  (\lambda-n) B \geq 0, \text{ if } v(x) \in F.
\end{align}
Utilizing \eqref{cond nabla v}, we see that if $P(v;x)=P_u$, the two situations below are impossible:
\begin{itemize}
\item[(i)] $v(x) \in \R^m \setminus F$,
\item[(ii)] $v(x) \in \partial F$, and $v(\omega) \cap (\R^m \setminus F) \neq \emptyset$ for every neighborhood $\omega \subset \R^n$ of $x$.
\end{itemize}
 Thus, there exists a neighborhood $\omega \subset \R^n$ of $x$ such that $v(\omega) \subset F$, and inequality \eqref{max principle theorem3} holds in $\omega$. Applying the maximum principle, we deduce that $P(v;\cdot) \equiv P_u$ in $\omega$, and by connectedness $P(v;\cdot) \equiv P_u$ in all $\R^n$. This implies, because of \eqref{max principle theorem3}, that $B\equiv 0$, $v$ is constant and $P_u \leq 0$. Therefore, we have proved that for every $x \in \R^n$: $\frac{\lambda }{2} |\nabla u(x) |^2 \leq W(u(x))$. In the case where $S=0$, taking $\lambda \to \infty$, we see that $u$ is constant. Finally, in the case where $u(\R^n) \subset F$, we take an arbitrary $\lambda >n$, and omit in the proof the arguments involving the set $u^{-1}(\R^m \setminus F)$.
 \end{proof}

\section{An alternative form of the stress-energy tensor in the plane}

We first recall the definition of the stress-energy tensor utilized in \cite{alikakosbasic} to establish various properties of the solutions to \eqref{elliptic system}, among them the weak monotonicity formula. To every solution $u: \R^n \supset \Omega \to \R^m$ to system \eqref{elliptic system}, is associated the stress-energy tensor $T$ which is the following $n \times n$ symmetric matrix
\begin{equation}\label{tensor}
T(u) := \frac{1}{2}
\left( \begin{array}{c}
|u_{x_1}|^2 - \displaystyle{\sum_{i\neq 1}^{n} |u_{x_i}|^2 - 2 W(u)},~~ 2 u_{x_1} \!\cdot u_{x_2},~ \cdots~,~~ 2 u_{x_1} \!\cdot u_{x_n} \\
2 u_{x_2} \!\cdot u_{x_1},~~ |u_{x_2}|^2 - \displaystyle{\sum_{i\neq 2}^{n} |u_{x_i}|^2 - 2 W(u)},~ \cdots~,~~ 2 u_{x_2} \!\cdot u_{x_n}\\
\ddots \\
2 u_{x_n} \!\cdot u_{x_1},~~ 2 u_{x_n} \!\cdot u_{x_2},~ \cdots~,~~ |u_{x_n}|^2 - \displaystyle{\sum_{i\neq n}^{n} |u_{x_i}|^2 - 2 W(u)} \end{array} \right),
\end{equation}
whose elements are invariant under rotations of the coordinate system.
Note that $T(u)$ can also be written as the sum of a scalar and a symmetric matrix: $$T(u)=-\Big(\frac{1}{2}|\nabla u|^2+W(u)\Big)I_n+ \big(u_{x_i} \cdot u_{x_j}\big)_{1\leq i,j\leq n},$$ where $I_n$ denotes the identity matrix of $\R^n$. 
Setting $T=(T_1, \dots, T_n)^{\top}$ and $\dv T = ( \dv T_1, \dots, \dv T_n)^{\top}$,
the tensor has the remarkable property that $\dv T=0$ for every solution to \eqref{elliptic system}.

In this section, we give an alternative form of the stress-energy tensor $T$ in the plane. Let $\Omega \subset \R^2$ be an open and simply connected domain of the plane. We associate to every solution $u:\R^2 \supset \Omega \to \R^m$ to \eqref{elliptic system} (where $W:\R^m \to \R$ is at least $C^1$ smooth), a function $U$, which solves the equation $\Delta U = 4W(u)$. Indeed, if $u\in C^2(\Omega,\R^m)$ is a solution to \eqref{elliptic system} in $\Omega$, the equations $\dv T_1=0$ and $\dv T_2=0$ can be interpreted as the compatibility conditions:
\begin{equation}\label{Ucompatibility}
\begin{cases}
\big [|u_{x_1}|^2 -|u_{x_2}|^2 + 2 W(u) \big ]_{x_2}=\big [ 2 u_{x_1} \cdot u_{x_2}  \big]_{x_1} \\
\big [|u_{x_2}|^2 -|u_{x_1}|^2 + 2 W(u) \big ]_{x_1}=\big [ 2 u_{x_1} \cdot u_{x_2}  \big]_{x_2},
\end{cases}
\end{equation}
which ensure the existence of a function $U\in C^3(\Omega,\R)$, defined modulo an affine function, and whose Hessian matrix is

\begin{equation}\label{HessianU}
D^2U=
\left( \begin{array}{c}
|u_{x_1}|^2 - |u_{x_2}|^2 + 2 W(u),~~ 2 u_{x_1} \!\cdot u_{x_2} \\
2 u_{x_1} \!\cdot u_{x_2},~~ |u_{x_2}|^2 - |u_{x_1}|^2 + 2 W(u) \\
\end{array} \right).
\end{equation}
We note that $D^2U \equiv 0$ if and only if $W(u) \equiv 0$, $|u_{x_1}| \equiv|u_{x_2}|$, and $u_{x_1} \!\cdot u_{x_2} \equiv 0$. In particular, when $W \equiv 0$, the Hessian matrix $D^2U$ of the function $U$ is related to the Hopf differential (cf. \cite{schoen-yau}):
$$\Phi:= \frac{1}{4} \big ( \big [ |u_{x_1}|^2-|u_{x_2}|^2 \big ] -2i \langle u_{x_1},u_{x_2} \rangle \big ) \dd z \otimes \dd z, \text{ where } z:=x_1+ix_2.$$ Both are two dimensional objects that vanish if and only if the solution $u$ is conformal.

In the next Proposition, we give a boundary condition for solutions of \eqref{elliptic system} to be conformal. It is interesting to compare this result with the corresponding ones for harmonic maps (cf. \cite{smyrnelis2}).
\begin{proposition}\label{prop10}
We assume that the potential $W \in C^1(\R^m,\R)$ is non-negative.
Let $B \subset \R^2$ be a ball of radius $R$, and let $u \in C^1(\overline{B},\R^m) \cap C^2(B,\R^m)$ be a solution of \eqref{elliptic system} satisfying on $\partial B$ the boundary condition:
\begin{equation}\label{boundary condition}
  |u_{\tau}|^2-|u_{\nu}|^2+2W(u)\leq 0,
\end{equation}
where $\nu$ is the outer unit normal vector to $\partial B$, $\tau$ the tangential one, $u_{\tau}:=\nabla u \cdot \tau$, and $u_{\nu}:=\nabla u \cdot \nu$. Then, $u$ is a harmonic map which is also conformal in $B$.
\end{proposition}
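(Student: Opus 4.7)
The plan is to extract information from the associated function $U$ introduced in this section. Since $W\geq 0$, $U$ is subharmonic: $\Delta U=4W(u)\geq 0$. Rewriting the Hessian formula \eqref{HessianU} in the rotated orthonormal frame $(\nu,\tau)$ along $\partial B$, one sees that the boundary hypothesis \eqref{boundary condition} is precisely the statement $U_{\tau\tau}=|u_\tau|^2-|u_\nu|^2+2W(u)\leq 0$ on $\partial B$. The first step is to prove harmonicity by a boundary integration. The circle $\partial B$ has curvature $1/R$, so $d\tau/ds=-\nu/R$, and therefore
$$\frac{d}{ds}(\nabla U\cdot\tau)=U_{\tau\tau}-\tfrac{1}{R}U_\nu.$$
The left-hand side integrates to zero around the closed curve $\partial B$, so using the divergence theorem on $U_\nu$,
$$\int_{\partial B}U_{\tau\tau}\,ds=\frac{1}{R}\int_{\partial B}U_\nu\,ds=\frac{1}{R}\int_B\Delta U\,dx=\frac{4}{R}\int_B W(u)\,dx.$$
The boundary condition forces the left side to be $\leq 0$, while $W\geq 0$ forces the right side to be $\geq 0$, so both integrals vanish.

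In particular $W(u)\equiv 0$ in $B$, and since the image of $u$ lies in the zero set of the non-negative potential $W$, one also has $\nabla W(u)\equiv 0$ there; the equation \eqref{elliptic system} therefore reduces to $\Delta u=0$, so $u$ is a harmonic map. Moreover $U_{\tau\tau}\equiv 0$ on $\partial B$, which combined with $W(u)=0$ gives $|u_\nu|=|u_\tau|$ on $\partial B$. For conformality inside $B$, I would then invoke the Hopf differential $\phi:=|u_{x_1}|^2-|u_{x_2}|^2-2i\,u_{x_1}\cdot u_{x_2}$. Since each component $u^j$ is harmonic on the simply connected disc $B$, it is the real part of a holomorphic function $f_j$, and a short Wirtinger computation yields $\phi=\sum_j\bigl(f_j'(z)\bigr)^2$, so $\phi$ is holomorphic on $B$. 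Using the rotation identity $u_\nu-iu_\tau=e^{i\theta}(u_{x_1}-iu_{x_2})$ at a boundary point $z=Re^{i\theta}\in\partial B$, one computes
$$z^2\phi(z)=R^2\bigl[|u_\nu|^2-|u_\tau|^2-2i\,u_\nu\cdot u_\tau\bigr]\quad\text{on }\partial B,$$
so $\mathrm{Re}\bigl(z^2\phi(z)\bigr)=R^2(|u_\nu|^2-|u_\tau|^2)=0$ on $\partial B$. The function $z^2\phi(z)$ is thus holomorphic on $B$ with vanishing real part on $\partial B$; by the maximum principle it is a purely imaginary constant, and evaluating at $z=0$ forces that constant to be $0$. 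Hence $\phi\equiv 0$ in $B$, which is exactly the conformality condition.

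The harmonic-map half of the conclusion comes essentially for free from the boundary integration of $U_{\tau\tau}$. The main obstacle is upgrading the boundary identity $|u_\nu|=|u_\tau|$ to the two interior identities $|u_{x_1}|=|u_{x_2}|$ and $u_{x_1}\cdot u_{x_2}\equiv 0$ required for conformality, because $U$ alone does not control the off-diagonal quantity $u_{x_1}\cdot u_{x_2}$ in the interior. It is the holomorphicity of the Hopf differential (an interior consequence of harmonicity) combined with the rotation identity on $\partial B$ that lets the maximum principle close the gap.
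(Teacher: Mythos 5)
Your proof is correct, and its first half coincides with the paper's argument: both establish the identity $\int_{\partial B}U_{\tau\tau}\,\dd\sigma=\frac{1}{R}\int_B \Delta U\,\dd x=\frac{4}{R}\int_B W(u)\,\dd x$ (the paper via Green's formula and the polar identity $U_\nu=RU_{\tau\tau}-\frac{1}{R}U_{\theta\theta}$, you via integrating $\frac{\dd}{\dd s}(\nabla U\cdot\tau)$ around the closed curve --- the same computation), then squeeze both sides between the sign of the boundary condition and the sign of $W$ to get $W(u)\equiv 0$, harmonicity, and $|u_\tau|=|u_\nu|$ on $\partial B$. The difference is in the final step: the paper simply cites an external result for harmonic maps (\cite{smyrnelis2}) to pass from the boundary identity $|u_\tau|=|u_\nu|$ to conformality in $B$, whereas you prove it directly, using the holomorphy of the Hopf differential $\phi=\sum_j(f_j')^2$, the rotation identity $\mathrm{Re}(z^2\phi)=R^2(|u_\nu|^2-|u_\tau|^2)=0$ on $\partial B$, and the maximum principle to conclude $z^2\phi\equiv 0$, hence $\phi\equiv 0$. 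This makes your write-up self-contained where the paper is not; the checks all go through (note only that $z^2\phi$ vanishing at $z=0$ is automatic, so the real content is that $\mathrm{Re}(z^2\phi)$ is harmonic, continuous on $\overline B$ because $u\in C^1(\overline B)$, and zero on $\partial B$).
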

\begin{proof}
Without loss of generality, we assume that $B$ is centered at the origin. We consider the polar coordinates $(r,\theta)$ and the corresponding positively oriented orthonormal basis $(\nu=x/|x|,\tau)$.
Applying Green's formula to the function $U$ we first prove that
\begin{equation}\label{pohozaevidentitysmooth2}
\int_{B} 4W(u) \dd x=\int_{\partial B}U_{\nu} =R \int_{\partial B} \big ( |u_{\tau}|^2-|u_{\nu}|^2+2W(u) \big ) \dd \sigma(x),
\end{equation}
since $U_\nu(R,\theta)=RU_{\tau \tau}(R,\theta)-\frac{1}{R}U_{\theta\theta}(R,\theta)$ and $U_{\tau \tau}:=D^2U(x)(\tau,\tau)= |u_{\tau}|^2-|u_{\nu}|^2+2W(u)$.
Next, utilizing the boundary condition \eqref{boundary condition}, we deduce that $W(u) \equiv 0$ in $B$. Thus, $u$ is harmonic, and moreover satisfies $|u_{\tau}|^2-|u_{\nu}|^2 = 0$ on $\partial B$. To conclude we apply a result for harmonic maps established in \cite{smyrnelis2}.
\end{proof}

When the solution $u$ is defined and bounded in all $\R^2$, it is known that its first derivatives are also bounded (cf. \cite{gilbarg-trudinger}, \S 3.4 p.37). In this case, the corresponding function $U$ is a solution of the equation $\Delta U=4W(u)$ in $\R^2$, with bounded second derivatives. According to the following Proposition, $U$ is the unique function, modulo a harmonic polynomial of degree 2, satisfying these properties.

\begin{proposition}\label{uniquenessU}
Let $u\in C^2(\R^2,\R^m)$ be a bounded solution to \eqref{elliptic system} in $\R^2$. Then, every solution $V$ of the equation $\Delta V=4W(u)$ in $\R^2$, with bounded second derivatives, can be written as $$V=U+\lambda (x_1^2-x_2^2)+\mu x_1x_2+\alpha x_1+\beta x_2+\gamma,$$ for constants $\lambda$, $\mu$, $\alpha$, $\beta$ and $\gamma$.
\end{proposition}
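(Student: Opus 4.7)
The plan is to reduce the statement to a Liouville-type theorem for harmonic functions on $\R^2$. First, I would set $H := V - U$. Since both $V$ and $U$ satisfy $\Delta = 4W(u)$ in $\R^2$, the difference is harmonic: $\Delta H \equiv 0$. By hypothesis $V$ has bounded second derivatives, and the function $U$ does too, because its Hessian is given by the explicit formula \eqref{HessianU} in terms of $W(u)$ and the products $u_{x_i}\cdot u_{x_j}$, all of which are bounded on $\R^2$ (the first derivatives of a bounded solution of \eqref{elliptic system} being uniformly bounded by the standard interior estimates already invoked in the proof of Theorem \ref{theorem0}, and $W(u)$ being bounded along the bounded range of $u$). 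Hence the Hessian of $H$ is bounded on the whole plane.

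Next, I would exploit that harmonic functions are automatically smooth, so each mixed second partial $H_{x_i x_j}$ is itself harmonic on $\R^2$ (differentiation with constant coefficients commutes with $\Delta$). Being a bounded harmonic function on the entire plane, the classical Liouville theorem forces each $H_{x_i x_j}$ to be a constant.

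Finally, a function whose Hessian is constant on $\R^2$ is a polynomial of degree at most two:
\[
H(x)=a x_1^2+b x_2^2+c\, x_1 x_2+\alpha x_1+\beta x_2+\gamma,
\]
for some real constants. Imposing $\Delta H=0$ forces $a+b=0$; setting $\lambda:=a$ and $\mu:=c$ then reproduces exactly the decomposition claimed in the proposition.

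There is no genuine obstacle in this argument: the entire proof hinges on the observation that the difference $V-U$ is a harmonic function on $\R^2$ with bounded Hessian, after which Liouville's theorem does all the work. The only point requiring verification is that $U$ itself has bounded Hessian, and this is immediate from the explicit formula \eqref{HessianU} combined with the standard interior gradient bound for bounded solutions of \eqref{elliptic system}.
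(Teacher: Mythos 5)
Your argument is correct and is essentially the paper's proof: the paper likewise forms the harmonic difference $h:=V-U$, uses the boundedness of the second derivatives of $V$ (and of $U$, via the explicit Hessian \eqref{HessianU} together with the interior gradient bound for bounded solutions) to conclude by Liouville's theorem that the second derivatives of $h$ are constant, and hence that $h$ is a harmonic polynomial of degree $2$. No differences worth noting.
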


\begin{proof}
Let $V$ be a solution of the equation $\Delta V=4W(u)$ in $\R^2$, with bounded second derivatives. We define the harmonic function $h:=V-U$ in $\R^2$. Since the second derivatives of $V$ are bounded, we deduce thanks to Liouville's theorem that the second derivatives of $h$ are constants. Thus, $h$ is a harmonic polynomial of degree 2.
\end{proof}

Now, we are going to give a geometric interpretation of the Modica estimate.

\begin{proposition}\label{convexityU}
If the potential $W:\R^m \to \R$ is non-negative, and $u\in C^2(\R^2,\R^m)$ is a solution to \eqref{elliptic system}, then the corresponding function $U$ is convex if and only if
\begin{equation}\label{convexity equation}
  \big( |u_{x_1}|^2 -|u_{x_2}|^2  \big )^2+4(u_{x_1} \cdot u_{x_2} )^2 \leq 4 (W(u))^2, \ \forall x \in \R^2.
\end{equation}
Moreover,
\begin{itemize}
\item Modica's inequality (cf. \eqref{Modica estimate}) implies the convexity of the function $U$.
\item When $m=1$, the function $U$ is convex and this property is equivalent to Modica's inequality.
\end{itemize}
\end{proposition}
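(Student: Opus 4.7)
The plan is to reduce the convexity of $U$ to the positive semi-definiteness of its Hessian, and then read off the stated inequality from the $2\times 2$ determinant condition. Since $U \in C^3(\R^2,\R)$, convexity is equivalent to $D^2U(x)$ being positive semi-definite at every point $x$, and for a $2\times 2$ symmetric matrix this is equivalent to the trace and the determinant both being non-negative.

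First I would compute the trace of the explicit Hessian from \eqref{HessianU}: the two diagonal entries sum to $4W(u)$, which is non-negative by hypothesis, so the trace condition is automatic. Then I would expand the determinant of \eqref{HessianU}:
\begin{equation*}
\det D^2U = \bigl(|u_{x_1}|^2 - |u_{x_2}|^2 + 2W(u)\bigr)\bigl(|u_{x_2}|^2 - |u_{x_1}|^2 + 2W(u)\bigr) - 4(u_{x_1}\cdot u_{x_2})^2,
\end{equation*}
which, using the identity $(b+a)(b-a)=b^2-a^2$ with $a=|u_{x_1}|^2-|u_{x_2}|^2$ and $b=2W(u)$, equals $4W(u)^2 - (|u_{x_1}|^2-|u_{x_2}|^2)^2 - 4(u_{x_1}\cdot u_{x_2})^2$. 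The condition $\det D^2 U \geq 0$ is then exactly \eqref{convexity equation}, which proves the stated equivalence.

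For the two additional items, the key algebraic identity is
\begin{equation*}
(|u_{x_1}|^2 - |u_{x_2}|^2)^2 + 4(u_{x_1}\cdot u_{x_2})^2 = (|u_{x_1}|^2 + |u_{x_2}|^2)^2 - 4\bigl(|u_{x_1}|^2|u_{x_2}|^2 - (u_{x_1}\cdot u_{x_2})^2\bigr).
\end{equation*}
By Cauchy--Schwarz, the bracketed term is non-negative, hence the left-hand side is bounded above by $(|u_{x_1}|^2 + |u_{x_2}|^2)^2 = |\nabla u|^4$. If Modica's inequality holds then $|\nabla u|^2 \leq 2W(u)$, so the left-hand side is bounded by $4W(u)^2$, which is \eqref{convexity equation}, proving convexity of $U$.

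When $m=1$ the Cauchy--Schwarz term vanishes identically since $u_{x_1}, u_{x_2}$ are collinear scalars, so the identity above becomes an equality and \eqref{convexity equation} reads $|\nabla u|^4 \leq 4W(u)^2$; taking square roots (using $W\geq 0$) recovers Modica's inequality $\tfrac{1}{2}|\nabla u|^2 \leq W(u)$, giving the reverse implication in the scalar case. No serious obstacle is expected: the whole argument is essentially the $2\times 2$ PSD criterion combined with one application of Cauchy--Schwarz, the only subtlety being to notice that Cauchy--Schwarz degenerates to equality precisely when $m=1$, which is exactly what makes the scalar case an equivalence rather than a one-way implication.
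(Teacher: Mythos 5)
Your proposal is correct and follows essentially the same route as the paper: convexity of $U$ is read off from the $2\times 2$ Hessian \eqref{HessianU} via the determinant condition (the trace being $4W(u)\geq 0$ automatically), and the Modica implication and the scalar equivalence both come down to the bound $\big(|u_{x_1}|^2-|u_{x_2}|^2\big)^2+4(u_{x_1}\cdot u_{x_2})^2\leq |\nabla u|^4$, with equality when $m=1$ — exactly the two facts the paper invokes, which your Cauchy--Schwarz identity makes explicit.
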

\begin{proof}
The function $U$ is convex if and only if $$\det(D^2 U) \geq 0 \Leftrightarrow \big( |u_{x_1}|^2 -|u_{x_2}|^2  \big )^2+4(u_{x_1} \cdot u_{x_2} )^2 \leq 4 (W(u))^2, \ \forall x \in \R^2.$$ Modica's inequality implies the last inequality for every $m \geq 1$, and is equivalent to it when $m=1$. To see this, just check that
\begin{equation}
\begin{cases}
|\nabla u|^4\geq \big( |u_{x_1}|^2 -|u_{x_2}|^2  \big )^2+4(u_{x_1} \cdot u_{x_2} )^2, &\text{ for every $m\geq 1$,}  \\
|\nabla u|^4=\big( |u_{x_1}|^2 -|u_{x_2}|^2  \big )^2+4(u_{x_1} \cdot u_{x_2} )^2, &\text{ when $m=1$.}  \nonumber
\end{cases}
\end{equation}
\end{proof}

\begin{remark}
Unfortunately, the convexity of $U$ cannot substitute the Modica estimate when $m \geq 2$. We are going to give a counterexample showing that in general the function $U$ is not convex. We consider a bounded solution $u:\R^2 \to \R^2$ of the Ginzburg-Landau system \eqref{GL system}, mentioned in \cite{gui}, and having the following two properties:
\begin{equation}\label{behavioursolutiongui}
|u(x)|=1- \frac{d^2}{2|x|^2}+o \Big(\frac{1}{|x|^2} \Big) \text{  as } |x| \to \infty, \text{ with } d \geq 1,
\end{equation}
\begin{equation}\label{hamiltonidentitysolutiongui}
\int_{\R} \big [ |u_{x_1}|^2-|u_{x_2}|^2+ 2W(u(x))  \big ] \dd x_1=0, \ \forall x_2 \in \R.
\end{equation}
From \eqref{behavioursolutiongui} and \eqref{hamiltonidentitysolutiongui}, it follows that the inequality:
\begin{equation}\label{inequalitysolutiongui}
|u_{x_1}|^2-|u_{x_2}|^2+ 2W(u) \geq 0
\end{equation}
is not satisfied in all $\R^2$, and as a consequence $U$ is not convex. Indeed, if \eqref{inequalitysolutiongui} holds in $\R^2$, then \eqref{hamiltonidentitysolutiongui} implies that $U_{x_1x_1}=|u_{x_1}|^2-|u_{x_2}|^2+ 2W(u) \equiv 0$, and integrating we find that $U(x_1,x_2)=f(x_2)x_1+g(x_2)$, where $f,g:\R \to \R$ are two smooth functions. Since $4W(u)=\Delta U=f''(x_2)x_1+g''(x_2)$ is bounded, we deduce that $f''\equiv 0$ and $4W(u)=g''(x_2)$. Finally, from the last equation and \eqref{behavioursolutiongui}, it follows that $g''\equiv 0$, $W(u) \equiv 0$, and $|u| \equiv 1$, which contradicts \eqref{behavioursolutiongui}.

Also note that a simpler counterexample invalidating the convexity of $U$ is provided by the solutions of the Ginzburg-Landau system: $u_R: \R^2 \to \R^2 \simeq \C$, $u_R(x_1,x_2)=R e^{i \sqrt{1-R^2} x_1}$.
\end{remark}

As an application of the function $U$, we are going to prove a (strong) monotonicity formula involving only the term with the potential. 
We need first to establish the following Lemma.
\begin{lemma}\label{monotonicity of Laplacian}
Let $V \in C^2(\R^n,\R)$ be a convex function, then $$r \to \frac{1}{r^{n-1}}\int_{B(x,r)} \Delta V(x) \dd x $$ is an increasing function of $r>0$ ($B(x,r) \subset \R^n$ denotes the ball centered at $x$ of radius $r$).
\end{lemma}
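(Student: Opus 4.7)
The plan is to reduce the statement to a purely one-variable monotonicity question by expressing the quantity as a spherical average. First, I would apply the divergence theorem to rewrite
$$\int_{B(x,r)} \Delta V(y)\, \dd y = \int_{\partial B(x,r)} \nabla V \cdot \n \, \dd \sigma,$$
where $\n$ is the outer unit normal on $\partial B(x,r)$. This turns the bulk integral of a second-order quantity into a boundary flux, which is much easier to differentiate in $r$.

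Next, I would change variables $y = x + r\omega$ with $\omega \in \SF^{n-1}$, under which $\dd \sigma = r^{n-1} \dd \omega$ and $\n = \omega$. This gives the key identity
$$\frac{1}{r^{n-1}}\int_{B(x,r)} \Delta V(y)\, \dd y = \int_{\SF^{n-1}} \nabla V(x+r\omega) \cdot \omega \, \dd \omega,$$
which nicely removes the $r$-dependence from the measure and pushes it entirely inside the integrand.

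Finally, differentiating under the integral sign with respect to $r$ yields
$$\frac{\dd}{\dd r}\left(\frac{1}{r^{n-1}}\int_{B(x,r)} \Delta V(y)\, \dd y\right) = \int_{\SF^{n-1}} D^2V(x+r\omega)(\omega,\omega)\, \dd \omega.$$
Since $V$ is convex, $D^2 V(z)(\omega,\omega) \geq 0$ pointwise, so the right-hand side is non-negative and monotonicity follows.

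There is really no obstacle here; the only point requiring a bit of care is justifying differentiation under the integral, which is routine because $V \in C^2$ and the integration is performed on the compact sphere $\SF^{n-1}$, giving local uniform bounds on $D^2V$. The whole argument is essentially a convexity-plus-divergence-theorem calculation, and it will also be useful to note for the subsequent monotonicity formula that equality at some $r$ would force $D^2V(x+r\omega)(\omega,\omega) \equiv 0$ along that sphere.
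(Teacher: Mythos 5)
Your proof is correct and follows essentially the same route as the paper: both pass via the divergence theorem to the spherical flux $\int_{\partial B}\nabla V\cdot \n\,\dd\sigma$, rescale to the unit sphere, and then invoke convexity of $V$ along rays. The only cosmetic difference is that you differentiate under the integral and use $D^2V\geq 0$, whereas the paper simply observes that $r\mapsto \nabla V(x+r\omega)\cdot\omega$ is non-decreasing for each fixed direction $\omega$, which avoids the (routine) justification of differentiating under the integral sign.
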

\begin{proof}
Without loss of generality we suppose that $x=0$. Since every $x \neq 0$ can be written $x=\rho n$ with $\rho=|x|$ and $n=x/|x|$, we have:
\begin{equation}\label{green}
  \int_{B(0,r)}\Delta V(x) \dd x= \int_{\partial B(0,r)}\frac{\partial V}{\partial n}(x) \dd \sigma (x)= r^{n-1}\int_{\partial B(0,1)}\frac{\partial V}{\partial n}(rn) \dd \sigma (n).
\end{equation}
Utilizing the convexity of $V$, we see that: $r_1 \leq r_2 \Rightarrow \frac{\partial V}{\partial n}(r_1 n) \leq \frac{\partial V}{\partial n}(r_2 n)$, for every $n \in \R^n$ such that $|n|=1$. Thus, we deduce from \eqref{green} the desired result.
\end{proof}

\begin{theorem}\label{strong monotonicity W}
Let $W \in C^2(\R^m,\R)$ be a non-negative potential, and let $u\in C^2(\R^2,\R^m)$ be a solution to \eqref{elliptic system} satisfying \eqref{convexity equation}. Then, $r \to \frac{1}{r}\int_{B(x,r)}W(u(x)) \dd x $ is an increasing function of $r>0$ ($B(x,r) \subset \R^n$ denotes the ball centered at $x$ of radius $r$). In particular, for every bounded solution $u\in C^3(\R^2,\R)$ of \eqref{scalar equation}, the previous monotonicity formula holds.
\end{theorem}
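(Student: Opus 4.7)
The plan is to combine the two preparatory results --- Proposition \ref{convexityU} and Lemma \ref{monotonicity of Laplacian} --- with the defining equation $\Delta U = 4W(u)$ of the auxiliary function $U$ introduced in this section.

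First I would observe that, by Proposition \ref{convexityU}, hypothesis \eqref{convexity equation} is exactly the condition that the associated function $U\in C^3(\R^2,\R)$ (whose Hessian is given by \eqref{HessianU}) is convex. Note that $U$ is globally defined because $\R^2$ is simply connected. Since $\Delta U=4W(u)$ pointwise, applying Lemma \ref{monotonicity of Laplacian} to $V:=U$ in dimension $n=2$ immediately gives that
\[
r\longmapsto \frac{1}{r}\int_{B(x,r)}\Delta U(y)\,\dd y=\frac{4}{r}\int_{B(x,r)}W(u(y))\,\dd y
\]
is an increasing function of $r>0$. Dividing by $4$ yields the desired monotonicity for $\frac{1}{r}\int_{B(x,r)}W(u)\,\dd y$.

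For the particular case stated in the second sentence of the theorem, I would invoke the Modica estimate \eqref{Modica estimate}, which holds for every bounded $C^3$ solution $u:\R^2\to\R$ of \eqref{scalar equation}. By the second item of Proposition \ref{convexityU}, Modica's inequality implies \eqref{convexity equation} (in fact it is equivalent to it in the scalar case $m=1$). Hence the previous argument applies and the monotonicity formula follows.

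I do not foresee any real obstacle: once Proposition \ref{convexityU} and Lemma \ref{monotonicity of Laplacian} are in hand, the theorem is essentially a one-line deduction. The only point that deserves a brief mention is the well-definedness of $U$ on all of $\R^2$, which is ensured by the simple connectedness of the domain used to integrate the compatibility conditions \eqref{Ucompatibility}.
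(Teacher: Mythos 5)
Your proposal is correct and follows exactly the paper's own argument: apply Proposition \ref{convexityU} to get convexity of $U$ from \eqref{convexity equation}, then Lemma \ref{monotonicity of Laplacian} with $\Delta U = 4W(u)$, and invoke Modica's estimate for the scalar case. The remark on the global definition of $U$ via simple connectedness is a reasonable extra detail but not a departure from the paper's route.
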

\begin{proof}
It is a straightforward consequence of Proposition \ref{convexityU} and Lemma \ref{monotonicity of Laplacian}. For bounded solutions $u\in C^3(\R^2,\R)$ of \eqref{scalar equation}, Modica's estimate holds, and thus, the corresponding function $U$ is convex.
\end{proof}
\begin{remark}
It is remarkable that an integral property, as the monotonicity formula in Theorem \ref{strong monotonicity W}, follows from a differential inequality (cf. \eqref{convexity equation}). We point out that the monotonicity formula mentioned in the Introduction, also holds for vector solutions to \eqref{elliptic system} satisfying the Modica inequality (cf. \cite{alikakosbasic}).
\end{remark}
\begin{remark}
Let us also give another application of Lemma \ref{monotonicity of Laplacian}. If $u:\R^n \to \R^m$ is a harmonic map such that $|u|^2$ is convex, then
$r \to \frac{1}{r^{n-1}}\int_{B(x,r)}|\nabla u(x)|^2 \dd x $ is an increasing function of $r>0$ (since $\Delta |u|^2=2|\nabla u|^2$).
\end{remark}

\section*{Acknowledgments}
The author wants to thank Prof. N. D. Alikakos for his continuous guidance, and also Prof. G. Fusco, Prof. A. Farina, and Prof. I. M. Sigal for their valuable suggestions and help.

\nocite{*}
\bibliographystyle{plain}

\begin{thebibliography}{99}

\bibitem{alikakosbasic}
N.~D.~Alikakos,
\textit{Some basic facts on the system $\Delta u - \nabla W (u) = 0$},
Proc. Amer. Math. Soc. \textbf{139} (2011) pp. ~153--162.

\bibitem{alikakos-fusco1}
N.~D.~Alikakos and G.~Fusco.
\newblock On the connection problem for potentials with several global minima.
\newblock {\em Indiana Univ.\ Math.\ J.} {\bf 57} (2008), pp.~1871--1906.

\bibitem{alikakos-fal}
N.~D.~Alikakos and A.~C.~Faliagas.
\newblock The stress-energy tensor and Pohozaev's identity for systems. 
\newblock {\em Acta Math. Sci.} {\bf  32}  No.~1 (2012), pp.~433--439.


\bibitem{bethuel}
F.~ Bethuel, H.~ Brezis, and F.~ Helein,
\newblock {\em Ginzburg-Landau vortices}.
\newblock Progress in Nonlinear Differential Equations and Their Appl. {\bf 13}, Birkh\"{a}user, Basel and Boston, 1994.

\bibitem{caffarelli-garofalo}
L.~Caffarelli, N.~Garofalo, and F.~Segala.
\newblock {\em A Gradient bound for entire solutions of quasi-linear equations and its consequences},
\newblock Communications on Pure and Applied Mathematics {\bf 47} No.~11 (1994), pp.~1457--1473.


\bibitem{docarmo}
M.~P.~Do Carmo.
\newblock {\em Differential geometry of curves and surfaces}.
\newblock Prentice Hall, Inc., Upper Saddle River, New Jersey, 1976.

\bibitem{farina1}
A.~Farina.
\newblock {\em Finite energy solutions, quantization effects and Liouville type results for a variant of the Ginzburg-Landau systems in $\R^K$},
\newblock  Differential Integral Equations {\bf 11} (No.~6) (1998) pp.~875–-893.

\bibitem{farina2}
A.~Farina.
\newblock {\em Two results on entire solutions of Ginzburg–-Landau system in higher dimension},
\newblock Journal of Functional Analysis {\bf 214} (2004), pp.~386–-395.

\bibitem{farina3}
A.~Farina and E.~Valdinoci.
\newblock {\em A pointwise gradient estimate in possibly unbounded domains with nonnegative mean curvature},
\newblock Advances in Mathematics {\bf 225} (2010), pp.~2808–-2827.

\bibitem{gilbarg-trudinger}
D.~Gilbarg and N.~S.~Trudinger.
\newblock {\em Elliptic partial differential equations of second order}.
\newblock Classics in Mathematics, Springer-Verlag, Berlin, reprint of the 1998 edition, 2001.

\bibitem{gui}
C.~Gui,
\newblock {\em Hamiltonian identities for partial differential equations},
Journal of Functional Analysis, {\bf 254} No.~4 (2008), pp.~904--933.

\bibitem{modica}
L.~Modica.
\newblock {\em A Gradient bound and a Liouville Theorem for nonlinear Poisson equations},
\newblock Communications on Pure and Applied Mathematics {\bf 38} No.~5 (1985), pp.~679--684.


\bibitem{schoen-yau}
R.~Schoen and S.-T.~Yau.
\newblock {\em Lectures on differential geometry},
\newblock Conference Proceedings and Lecture Notes in Geometry and Topology, Volume II, International Press, Boston, 1994.

\bibitem{smyrnelis}
P.~Smyrnelis.
\newblock  {\em Solutions to elliptic systems with mixed boundary conditions}.
\newblock Phd thesis, University of Thessaloniki, 2012.

\bibitem{smyrnelis2}
P.~Smyrnelis.
\newblock  {\em The harmonic map problem with mixed boundary conditions}.
\newblock To appear in Proceedings of the American Mathematical Society.


\bibitem{sperb}
R.~Sperb.
\newblock {\em Maximum principles and their applications}.
\newblock Mathematics in Science and Engineering, {\bf 157}, Academic Press, New York, 1981.


\end{thebibliography}

\end{document}